\titleformat{\subsection}[runin]
            {\normalfont\bfseries}{}{0em}{}[.]
\newfont{\blb}{msbm10 scaled\magstep1}
\newfont{\comp}{cmr12 scaled\magstep1}
\newfont{\compb}{cmr10 scaled\magstep2}
\newfont{\sbb}{cmssbx10 scaled\magstep3}
\newfont{\sbbb}{cmssbx10 scaled\magstep5}
\newfont{\sbs}{cmssbx10 scaled\magstep1}
\newtheorem{theorem}{Theorem}
\newtheorem{lemma}{Lemma}
\newtheorem{claim}[subsection]{Claim}
\newtheorem{corollary}{Corollary}
\newtheorem{definition}{Definition}
\def\cL{\mathcal{L}}
\title{Combining the theorems of Tur\'an  and de Bruijn-Erd\H os}
\author{
Sayok Chakravarty\thanks{Department of Mathematics, Statistics and Computer Science, University of Illinois, Chicago, IL 60607. Email: schakr31@uic.edu. Research partially supported by NSF Award DMS-2153576.  }\and
Dhruv Mubayi\thanks{Department of Mathematics, Statistics and Computer Science, University of Illinois, Chicago, IL 60607. Email: mubayi@uic.edu. Research partially supported by NSF Awards
DMS-1952767 and DMS-2153576 and a Simons Fellowship.}
}
\begin{document}

\maketitle

\begin{abstract}
     Fix an integer $s \ge 2$. Let $\mathcal{P}$ be a set of $n$ points and let $\mathcal{L}$ be a set of lines in a linear space  such that no line in $\mathcal{L}$ contains more than $(n-1)/(s-1)$  points of $\mathcal{P}$. Suppose that for every $s$-set $S$ in $\mathcal{P}$,  there is a pair of points in   $S$ that lies in a line from $\mathcal{L}$. We prove that  $|\cL| \ge (n-1)/(s-1)+s-1$ for $n$ large, and this is sharp when $n-1$ is a multiple of $s-1$. This generalizes the de Bruijn-Erd\H os theorem which is the case $s=2$. Our result is proved in the more general setting of linear hypergraphs.
\end{abstract}

\section{Introduction}
A finite linear space over a set $X$ is a family $\mathcal{L}$ of its subsets, called lines,
such that every line contains at least two points, and any two points are on exactly one line. A fundamental theorem proved by  de Bruijn and Erd\H os~\cite{MR28289} states that if $\mathcal{L}$ is a finite linear space over a set $X$ with $X \notin \mathcal{L}$, then $|\mathcal{L}| \ge |X|$ and equality holds if and only if $\mathcal{L}$ is either a near pencil or a projective plane. This is often viewed as a statement in incidence geometry, in which case it states that the number of lines determined by $n$ points in a projective plane is at least $n$. The result also has the following graph theoretic formulation: the minimum number of proper complete subgraphs of the complete graph $K_n$ that are needed to partition its edge set is $n$ (see~\cite{alon2012bruijn} for an extension of this formulation to hypergraphs). Various other extensions have been studied. For instance,~\cite{chiniforooshan2011bruijn} considered the problem of determining the minimum number of lines determined by $n$ points in general metric spaces  and~\cite{dolevzal2019bruijn} defined a notion of de Bruijn-Erd\H os sets in measure spaces and bounded the Hausdorff dimension and Hausdorff measure of such sets. The de Bruijn-Erd\H os theorem is also a basic result in extremal set theory and design theory that has many extensions and generalizations. The most  notable of these are due to Fisher~\cite{Fisher}, Bose~\cite{bose}, and Ray-Chaudhuri-Wilson~\cite{RW}.  

Here we consider another natural generalization of the  de Bruijn-Erd\H os theorem.  We relax the condition that every pair of points lies in a line as follows. An $s$-set is a set of size $s$.

\begin{definition}
A collection of subsets (lines) $\mathcal{L}$ of a set $X$ is an $s$-cover  if every two lines in 
$\cL$ have at most one point in common and for every $s$-set $S \subset X$, some pair of points from $S$ lies in a line in $\cL$. 
\end{definition}

 Note that when $s=2$, this is the definition of a linear space (excluding trivial requirements). We can view this definition through the lens of graph theory as follows. 
 Consider the graph $G=(X,E)$ where $E$ is the set of pairs not contained in any line in $\mathcal{L}$. Then $G$ is $K_s$-free when $\mathcal{L}$ is an $s$-cover. Hence, the $s$-cover condition can be thought of as a Tur\'an-type property. 
 
  As $s$ becomes larger, the requirement for a family to be an $s$-cover becomes weaker, and hence the number of lines needed for an $s$-cover decreases. So a natural question is to ask for the size of a smallest $s$-cover.  In order to make this problem nontrivial, we need to impose an upper bound on the size of subsets in $\cL$. For example, if we allow sets of size $|X|$, then just one set suffices to cover every pair. Morever, if  sets in $\cL$ are allowed to be of size greater than $(n-1)/(s-1)$, then we can take a collection of $s-1$ pairwise disjoint sets that cover all the points. This is an $s$-cover, as any $s$ points will contain two points in one of the sets and will be covered. As $n \rightarrow\infty $ this is has constant size.
  Hence the natural condition to obtain a nontrivial result as $n \rightarrow\infty $ is that all sets in $\cL$ have size at most $t=(n-1)/(s-1)$.  
  
  Under this condition, a straightforward construction reminiscient of the construction for Tur\'an's graph theorem is the following. Assume that $t=(n-1)/(s-1)$ is an integer. The underlying set is a $t \times (s-1)$ grid with an additional new vertex $z$, and the line set comprises all  columns as well as  all rows where we append $z$ to each row (see Figure \ref{picture}). Formally, $X= ([t] \times [s-1]) \cup \{z\}$, and 
  $$\mathcal L = \{c_i: 1\le i \le s-1\} \cup \{r_j \cup \{z\}: 1\le j \le t\},$$ where the $i$th column is $c_i:=[t] \times \{i\}$ and the $j$th row is $r_j:=\{j\} \times [s-1]$.  
   This yields an $s$-cover with $|\mathcal L| = s-1 + t$. 
   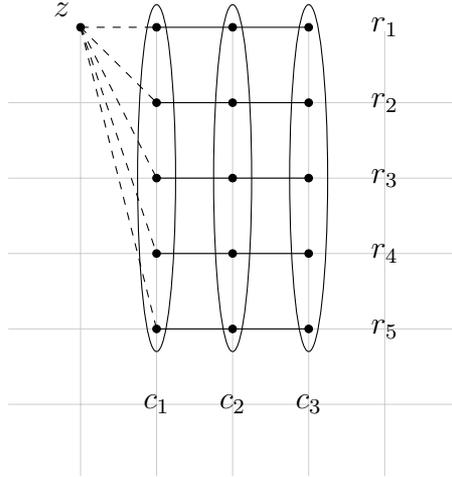
\begin{figure} \label{picture}
   \begin{center}
      \begin{tikzpicture}

        \draw[step=1,help lines,black!20] (-0.95,-0.95) grid (4.95,4.95);
        
        \foreach \Point/\PointLabel in {(0,5)/z, (1,1)/, (1,2)/, (1,3)/, (1,4)/, (1,5)/, (2,1)/, (2,2)/, (2,3)/,(2,4)/, (2,5)/, (3,1)/, (3,2)/, (3,3)/, (3,4)/, (3,5)/}
        \draw[fill=black] \Point circle (0.05) node[above left] 
        {$\PointLabel$};

        \draw (1,1)--(3,1) node at (4,5) {$r_1$};
        \draw (1,2)--(3,2) node at (4,4) {$r_2$};
        \draw (1,3)--(3,3) node at (4,3) {$r_3$};
        \draw (1,4)--(3,4) node at (4,2) {$r_4$};
        \draw (1,5)--(3,5) node at (4,1) {$r_5$};

        \draw[dashed](0,5)--(1,1) node[above]{};
        \draw[dashed] (0,5)--(1,2) node[above]{};
        \draw[dashed] (0,5)--(1,3) node[above]{};
        \draw[dashed] (0,5)--(1,4) node[above]{};
        \draw[dashed] (0,5)--(1,5) node[above]{};

        \draw (1,3) ellipse (0.25cm and 2.3cm) node at (1,0)  {$c_1$};
        \draw (2,3) ellipse (0.25cm and 2.3cm) node at (2,0)  {$c_2$};
        \draw (3,3) ellipse (0.25cm and 2.3cm) node at (3,0)  {$c_3$};
    \end{tikzpicture}  
    \end{center}
    \caption{The construction of $\cL$ when $t=5$ and $s=4$.}
   \end{figure}

 In this paper, we show that the above construction is tight. Our main result is the following theorem.
 
\begin{theorem} \label{mainthm}
Fix $s \ge 2$. Let $\mathcal{L}$ be an $s$-cover over a set of size $n$. Suppose that each set in $\cL$ has size at most $(n-1)/(s-1)$. Then $|\cL| \ge (n-1)/(s-1)+s-1$ for $n$ large and this is tight if  $(s-1) \mid (n-1)$. If $(s-1) \nmid (n-1)$, then the bound $(n-1)/(s-1)+s-1$ is tight asymptotically as $n \rightarrow \infty$. 
\end{theorem}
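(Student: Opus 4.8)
The plan is to pass to the two graphs attached to $\cL$: the \emph{uncovered graph} $G$ on the ground set $X$, whose edges are exactly the pairs of points lying in no line of $\cL$, and the \emph{covered graph} $H=\overline G$. The $s$-cover hypothesis says precisely that $G$ is $K_s$-free, so by Tur\'an's theorem $e(G)\le e(T_{s-1}(n))$, where $T_{s-1}(n)$ is the Tur\'an graph; and since any two lines meet in at most one point, each edge of $H$ lies in exactly one line, so $\cL$ is a \emph{linear} partition of $E(H)$ into cliques of size at most $t:=(n-1)/(s-1)$. I will use throughout the de Bruijn-Erd\H os degree inequality, which persists in the form: if $p\notin L\in\cL$, then the lines through $p$ meeting $L$ are pairwise distinct (two of them would share two points of $L$), whence $r_p\ge |L|-|L\cap N_G(p)|$, where $r_p$ is the number of lines of $\cL$ through $p$. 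Combined with the identity $\sum_{L\ni p}(|L|-1)=n-1-\deg_G(p)$ and the bound $|L|\le t$, this controls the $r_p$'s, and $\sum_p r_p=\sum_{L\in\cL}|L|\le t\,|\cL|$ converts such control into a lower bound on $|\cL|$.

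I would argue by induction on $s$, the case $s=2$ being the de Bruijn-Erd\H os theorem itself (there $t=n-1$, so $|\cL|\ge n=t+1$). The inductive step splits according to $\Delta(G)$. Suppose first that $G$ has a vertex $v$ with $\deg_G(v)\ge (s-2)t+s-1$, and set $N:=N_G(v)$. Then $G[N]$ is $K_{s-1}$-free, and I claim that $\cL_N:=\{L\cap N: L\in\cL,\ |L\cap N|\ge 2\}$ is an $(s-1)$-cover of $N$: given an $(s-1)$-set $S\subseteq N$, the set $S\cup\{v\}$ has size $s$, so some pair in it is covered; this pair cannot contain $v$, since every vertex of $N$ is a $G$-neighbour of $v$ and hence uncovered with $v$, so it lies inside $S$. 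All lines of $\cL_N$ have size at most $t\le (|N|-1)/(s-2)$, the last inequality holding by the degree assumption, so the inductive hypothesis gives $|\cL|\ge|\cL_N|\ge (|N|-1)/(s-2)+s-2\ge (n-1)/(s-1)+s-1$, as desired. In the non-divisible case one runs the same computation with $t=\lfloor (n-1)/(s-1)\rfloor$, the discrepancy being absorbed in lower-order terms.

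The remaining case, $\Delta(G)\le (s-2)t+s-2$, is the heart of the matter, and it already contains near-extremal configurations, since the Tur\'an graph $T_{s-1}(n)$ has maximum degree only about $(s-2)t$. Here every point of $X$ is covered with at least $t-s+2$ others, so $H$ has large minimum degree. I would subdivide according to the size $\ell$ of a largest line $L_1$. If $\ell$ is small (say $\ell=O(\sqrt{t}\,)$), then $\sum_{L\in\cL}\binom{|L|}{2}=\binom n2-e(G)\ge\binom n2-e(T_{s-1}(n))$, combined with $\binom{|L|}{2}\le\binom{\ell}{2}$, already forces $|\cL|\gg t$. If $\ell$ is large (say $\ell\ge\varepsilon t$) I would apply the degree inequality against $L_1$ itself: $r_p\ge\ell-|L_1\cap N_G(p)|$ for $p\notin L_1$, summed over all such $p$, with $\sum_{q\in L_1}\deg_G(q)$ and $e(G[L_1])$ estimated via Tur\'an applied inside $L_1$ and via the contribution of the lines through the points of $L_1$. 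The genuine obstacle is the intermediate regime of $\ell$, where neither the crude pair count nor the degree inequality against a single line is decisive, and where the points of a large line may themselves have large $G$-degree; I expect one must single out the ``column-like'' lines --- large lines almost all of whose points are nearly isolated in the part of $G$ outside the line --- prove that there are at most $s-1$ of them, and then run a de Bruijn-Erd\H os-type count on the remaining ``row-like'' lines to recover the extra $t$.

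Finally, the exact value $t+s-1$ when $(s-1)\mid(n-1)$, together with the identification of the extremal $\cL$ with the row/column grid of Figure~\ref{picture}, would be obtained by tracing the equality cases through the steps above: equality in the Tur\'an estimates forces the relevant restrictions of $G$ to be Tur\'an graphs, equality in the degree inequalities forces lines to meet in prescribed ways, and these rigidities pin down the grid. The hypothesis that $n$ (equivalently $t$) is large is used to make the Tur\'an estimates, the induction, and the interpolation between the sub-cases of the last case go through simultaneously, and to exclude sporadic small configurations.
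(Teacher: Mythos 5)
Your skeleton matches the paper's in broad strokes: induction on $s$ with de Bruijn--Erd\H os as the base case, the reduction to $N_G(v)$ for a vertex of large $G$-degree (this is the paper's Lemma~\ref{bounds}.2, with $Q=N_G(v)$ for a minimum-degree vertex $v$ of the largest line), and the Tur\'an pair-count $\sum_L\binom{|L|}{2}\ge \binom n2 - e(T_{s-1}(n))$ for the regime where all lines are small (the paper's Case~1). The degree inequality $r_p\ge |L|-|L\cap N_G(p)|$ you state is correct and is implicitly the engine behind several of the paper's counts. But the proposal has a genuine gap, and you have located it yourself: the ``intermediate regime'' that you describe as ``the genuine obstacle'' and resolve only with ``I expect one must\dots'' is where essentially all of the work lies. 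In the paper this occupies Cases~2 through~5: a hierarchy of constants $\delta_2\ll\delta_1\ll\delta\ll\delta_4\ll\delta_3\ll 1/s^2$, a double count of covered pairs separately inside $P$ (the union of lines through $v$) and inside $Q$, leading to a contradiction when $a_1\approx\sqrt n$; a separate argument for $\sqrt{sn}\ll a_1\le(1/(s-1)-\varepsilon)n$ using an averaging/Jensen bound to find a low-degree point $x_\ell$ whose other lines are small; and, for $a_1$ within $O(1)$ of the maximum $\lfloor(n-1)/(s-1)\rfloor$, an iterative extraction of up to $s-2$ pairwise disjoint lines of maximum size $\ell$, at each stage certifying via a matching argument that all $(s-j)$-sets of the residual set are covered, and finally invoking the near-pencil/projective-plane dichotomy of de Bruijn--Erd\H os to rule out $m=\ell+s-2$. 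Your ``column-like vs.\ row-like'' heuristic points in the right direction, but none of these steps is supplied, and they do not follow routinely from the two tools you set up.

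Two further issues. First, the induction as you state it does not close at the boundary: when the largest line has size within $1$ of $(n-1)/(s-1)$, the inductive bound $(|Q|-1)/(s-2)+s-2$ lands exactly on $(n-1)/(s-1)+s-2$ and you need \emph{strict} inequality from the inner induction to finish. The paper handles this by proving the stronger Theorem~\ref{mainthmstrong} (strict inequality when all lines have size at most $(n-1)/(s-1)-1$) and inducting on that statement instead; your proposal has no analogue, and ``the discrepancy being absorbed in lower-order terms'' will not absorb an exact tie. Second, tightness is not obtained by ``tracing the equality cases'': it requires constructions. The grid of Figure~\ref{picture} (equivalently, the recursive construction in the paper) settles the case $(s-1)\mid(n-1)$, but the asymptotic tightness claim for $(s-1)\nmid(n-1)$ is a separate assertion that the paper proves by building $s-2$ disjoint near-maximum lines plus a projective plane of order $q\approx\sqrt{n/(s-1)}$, using the Baker--Harman--Pintz theorem on primes in short intervals to choose $q$; your proposal does not address this part of the statement at all.
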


 The general framework of Theorem~\ref{mainthm} specializes to give appealing geometric statements as given in the abstract or the more special form below.

\begin{corollary}
 Fix an integer $s \ge 2$. Let $\mathcal{P}$ be a set of $n$ points and let $\mathcal{L}$ be a set of $m$ lines in the plane  such that no line in $\mathcal{L}$ contains more than $(n-1)/(s-1)$  points of $\mathcal{P}$. Suppose that for every $s$-set $S$ of points from $\mathcal{P}$,  there is a  pair of points in $S$ lies in some line from $\mathcal{L}$. Then $m \ge (n-1)/(s-1) + s-1$ for $n$ large and if $n-1$ is a multiple of $s-1$,  this is tight. 
\end{corollary}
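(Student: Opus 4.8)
The plan is to pass to graph theory. Let $H$ be the graph on $X$ with $uv\in E(H)$ exactly when some line of $\cL$ contains both $u$ and $v$; then $H=\overline G$ for the $K_s$-free graph $G$ of the introduction, so $\alpha(H)\le s-1$, and since two lines meet in at most one point, every edge of $H$ lies in a unique line, so $E(H)$ is partitioned into the cliques on the vertex sets of the lines, each of size at most $t:=\lfloor(n-1)/(s-1)\rfloor$. Thus I must show that any clique partition of $E(H)$ into parts of size $\le t$ has at least $t+s-1$ parts once $n$ is large. I will use repeatedly the following restricted de Bruijn--Erd\H os fact: if $K\subseteq X$ is a clique of $H$ with $|K|\ge t+1$, then no line contains $K$ and the traces $\{\ell\cap K:\ell\in\cL,\ |\ell\cap K|\ge2\}$ form a proper linear space on $K$ (each pair of $K$ is covered, by a unique line, and distinct lines leave distinct traces), so at least $|K|\ge t+1$ lines meet $K$ in two points. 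This will produce the ``$t$''; the extra ``$s-1$'' comes from a Tur\'an-type argument on $X\setminus K$.

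I would run an induction on $s$, the base $s=2$ being exactly de Bruijn--Erd\H os. For $s\ge3$ I first dispose of the case that some vertex $v$ has $\deg_H(v)\le t-1$. Put $W:=X\setminus(N_H(v)\cup\{v\})$, so $|W|\ge n-t\ge(s-2)t+1$; since $\{v\}$ together with any independent set of $H[W]$ is independent in $H$, we get $\alpha(H[W])\le s-2$, so the traces of $\cL$ on $W$ form an $(s-1)$-cover of $W$ with all lines of size $\le t\le(|W|-1)/(s-2)$. By the inductive hypothesis there are at least $(|W|-1)/(s-2)+(s-2)\ge t+s-2$ lines meeting $W$, none of which passes through $v$, while $v$ lies on at least one further line (and if $\deg_H(v)=0$ the slack in the bound already suffices); hence $|\cL|\ge t+s-1$.

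It remains to treat $\delta(H)\ge t$, equivalently $\Delta(G)\le n-1-t$. The goal now is to exhibit a clique $K$ of $H$ with $|K|\ge t+1$ together with at least $s-2$ lines disjoint from $K$: the restricted de Bruijn--Erd\H os fact supplies $t+1$ lines meeting $K$, and the $s-2$ disjoint lines --- which should be extracted from the $\approx(s-2)t$ points of $X\setminus K$, whose internal pairs must still be covered --- bring the total to $t+s-1$. The entire difficulty is concentrated in producing $K$, and this is precisely where $n$ large is needed.

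That step is the main obstacle. It cannot be done from $\alpha(H)\le s-1$ alone: a pseudorandom $K_s$-free graph $G$ has independence number $n^{o(1)}$, so its complement $H$ has no large clique; nor from edge counting, since Tur\'an's theorem only gives $\sum_{\ell}\binom{|\ell|}2=e(H)\ge\binom n2-\ex(n,K_s)$, which with $|\ell|\le t$ forces merely $|\cL|\ge s-1$. One must instead use the clique-partition structure --- every pair of points lies in a unique line --- together with $\delta(H)\ge t$ to argue that, for $n$ large, $H$ is forced close to a disjoint union of $s-1$ cliques of order $\approx t$ (the complement of a near-extremal graph for $\ex(n,K_s)$), essentially the only way to cover $E(H)$ by fewer than $t+s-1$ bounded cliques under the degree condition; from that near-extremal structure $K$ and the extra lines can be read off. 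I expect the real work to be this structural dichotomy --- ruling out the intermediate ``pseudorandom-like'' regime in which $H$ has no clique of size $t+1$ yet might still be covered economically --- which plausibly needs one to track how many lines pass through individual points rather than to count edges, followed by a finite clean-up to turn the near-extremal structure into the exact bound $t+s-1$ and the equality case when $(s-1)\mid(n-1)$; when $(s-1)\nmid(n-1)$ this clean-up only yields the asymptotic statement.
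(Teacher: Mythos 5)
In the paper this corollary is not given an independent proof: since two distinct lines in the plane share at most one point, the hypotheses make $\mathcal{L}$ an $s$-cover in the sense of Definition~1, so the bound is immediate from Theorem~\ref{mainthm}, and tightness follows from the remark that some of the extremal hypergraphs can be realized by lines in the plane. You are therefore effectively re-proving Theorem~\ref{mainthm} from scratch, and your write-up has a genuine gap there: the entire case $\delta(H)\ge t$ is not proved. You say so yourself (``that step is the main obstacle''), and what you propose in its place --- that the clique-partition structure plus the degree condition should force $H$ to be close to a disjoint union of $s-1$ cliques of order about $t$, from which a clique $K$ with $|K|\ge t+1$ and $s-2$ further lines can be read off --- is precisely the hard content of the theorem, and it is not how the paper argues. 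In particular, a plan that hinges on always exhibiting a clique of size $t+1$ cannot work uniformly: when the largest line has size $a_1=O(\sqrt{n})$ (the regime where a projective-plane-like cover of all pairs by $n$ lines of size about $\sqrt n$ is the danger), there is no reason for $H$ to contain a clique of size anywhere near $t+1$, since $\alpha(H)\le s-1$ only guarantees cliques of polynomially smaller order. The paper handles this by a case analysis on $a_1$: for $a_1<(1-\delta)\sqrt n$ a Tur\'an edge count ($\sum_i\binom{a_i}{2}\ge n^2/2(s-1)-n/2$ with $\binom{a_i}{2}\le a_1^2/2$) already forces $m>n/(s-1)$; for $a_1\approx\sqrt n$ it needs a delicate double count of covered pairs inside $P=\bigcup N(v)$ and inside $Q=[n]\setminus P$ (Claim~\ref{claimcover} and the surrounding computation); for intermediate and large $a_1$ it uses the elementary-symmetric-polynomial bound of Lemma~\ref{bounds}.3, low-degree vertices of $A_1$, and finally an iterative matching argument peeling off $\ell$-sets $B_1,\dots,B_{s-3}$. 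None of this is supplied or replaceable by the structural dichotomy you gesture at, so the proposal does not establish the lower bound.

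The parts you do carry out are sound and parallel the paper. Your low-degree reduction (if some $v$ has $\deg_H(v)\le t-1$, pass to $W=X\setminus(N_H(v)\cup\{v\})$, note $\alpha(H[W])\le s-2$ and $t\le(|W|-1)/(s-2)$, and apply induction on $s$ to get $t+s-2$ lines avoiding $v$ plus at least one through $v$) is essentially the paper's Lemma~\ref{bounds}.2, and the base case $s=2$ via de Bruijn--Erd\H os and the ``restricted de Bruijn--Erd\H os'' observation on traces over a large clique both reappear in the paper (e.g.\ in Cases 4 and 5). You would also need to address tightness when $(s-1)\mid(n-1)$, which the paper gets from an explicit recursive construction ($\cL_n(s)$ built from $\cL_{n-t}(s-1)$ plus a $t$-set) realized geometrically; your proposal does not construct anything. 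But the decisive issue is the unproved main case.
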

We remark that equality can hold above as some of the hypergraphs we construct to prove tightness for Theorem \ref{mainthm} can be realized as lines in the plane.

Our proof requires $n$ to be large in terms of $s$ and it remains an open problem to prove the result for small $n$. 

\section{Proof of Theorem~\ref{mainthm}}
We will prove Theorem~\ref{mainthm} by induction on $s$. However, in order to facilitate the induction argument, we need to prove a slightly stronger statement for $s \ge 3$ as shown below.

\begin{theorem} \label{mainthmstrong}
The statement of Theorem~\ref{mainthm} holds with the following strengthening. If $s \ge 3$ and each set in $\cL$ has size at most $(n-1)/(s-1)-1$, then $|\cL|>(n-1)/(s-1)+s-1$.
\end{theorem}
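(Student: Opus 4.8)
The plan is to prove Theorem~\ref{mainthmstrong} by induction on $s$, the base case $s=2$ being exactly the de Bruijn--Erd\H os theorem (the strengthening is vacuous there). So fix $s\ge 3$, assume the statement for $s-1$, and let $\mathcal{L}$ be an $s$-cover over an $n$-set $X$ with $n$ large and every line of size at most $t:=(n-1)/(s-1)$ (for the strengthened part, at most $t-1$). Let $G$ be the graph on $X$ whose edges are the pairs covered by no line of $\mathcal{L}$; by definition of an $s$-cover, $G$ is $K_s$-free, and $\overline{G}$ (the ``covered'' graph) has every line of $\mathcal{L}$ as a clique. The engine of the induction is a restriction principle: if $W\subseteq X$ is such that $G[W]$ is $K_{s-1}$-free, then $\mathcal{L}[W]:=\{L\cap W:L\in\mathcal{L},\ |L\cap W|\ge 2\}$ is an $(s-1)$-cover over $W$ --- its uncovered graph is precisely $G[W]$ --- the map $L\mapsto L\cap W$ is injective on lines meeting $W$ in at least two points (two such lines would share two points), and therefore
\[
|\mathcal{L}| \;=\; |\mathcal{L}[W]|\;+\;\#\{L\in\mathcal{L}:\ |L\cap W|\le 1\}.
\]
The lines of $\mathcal{L}[W]$ have size at most $t$, so whenever $|X\setminus W|\le t$ one has $t\le(|W|-1)/(s-2)$ and the induction hypothesis applies to $\mathcal{L}[W]$, yielding $|\mathcal{L}[W]|\ge(|W|-1)/(s-2)+s-2$.

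First I would take $W=X\setminus Z$ where $Z$ meets every $(s-1)$-clique of $G$, so that $G[W]$ is $K_{s-1}$-free as required; write $\tau$ for the minimum size of such a transversal. If $\omega(G)\le s-2$ then $\tau=0$, $\mathcal{L}$ is already an $(s-1)$-cover, and the induction hypothesis gives $|\mathcal{L}|\ge(n-1)/(s-2)+s-2$, which for $n$ large comfortably exceeds $(n-1)/(s-1)+s-1$. Otherwise $\omega(G)=s-1$. A short computation with the identity $n-1-t(s-2)=t$ shows that, provided $\tau\le t$, the displayed identity and the induction hypothesis give $|\mathcal{L}|\ge(n-\tau-1)/(s-2)+s-2+k$, where $k=\#\{L:|L\cap W|\le 1\}$; when $\tau\le t-(s-2)$ this is already at least $t+s-1$ with no contribution from $k$ needed (and this also provides the room for the strict inequality of the strengthened statement, where the lines have size $\le t-1$), while when $t-(s-2)<\tau\le t$ it only reaches $t+s-2+k$, so the argument must in addition exhibit at least one line $L$ with $|L\cap W|\le 1$, i.e.\ a line almost swallowed by the chosen transversal.

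The crux --- and the step I expect to be the real obstacle --- is controlling $\tau$ and producing that swallowed line, i.e.\ showing that an $s$-cover with few lines cannot have its $(s-1)$-cliques spread out. Here I would run a stability dichotomy. If $G$ is ``far'' from the extremal configuration of Figure~\ref{picture} --- concretely, if the covered graph $\overline{G}$ is spread thinly, so that no vertex accumulates close to a linear number of covered neighbours through short lines --- then I would bound $|\mathcal{L}|$ directly from below by double counting, combining $\sum_{v}\binom{r_v}{2}=\sum_{L\ne L'}|L\cap L'|\le\binom{|\mathcal{L}|}{2}$ (where $r_v$ is the number of lines through $v$) with the bounds $r_v\ge\deg_{\overline{G}}(v)/(t-1)$, $\deg_{\overline{G}}(v)=n-1-\deg_{G}(v)$, and a Tur\'an-type estimate on $\sum_v\deg_{\overline{G}}(v)$, to force $|\mathcal{L}|$ to be much larger than $t+s-1$. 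If instead $G$ is ``close'' to the extremal configuration --- a balanced complete $(s-1)$-partite-like graph carrying a rook-type perturbation together with the apex vertex --- then a cleaning argument should pin down the structure of the $(s-1)$-cliques tightly enough that a single line of $\mathcal{L}$ (the analogue of a ``column'' in Figure~\ref{picture}) meets all of them; taking $Z$ to be that line gives $|Z|\le t$ and $k\ge 1$, closing the induction.

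The rest is bookkeeping that I would carry out carefully but expect to be routine: tracking the strict inequality in the strengthened statement through both branches of the dichotomy (the ``far'' branch gives far more than $t+s-1$; in the ``close'' branch the hypothesis that lines have size $\le t-1$ supplies the extra unit, via the strengthened induction hypothesis on $\mathcal{L}[W]$ when $s\ge 4$ and directly when $s=3$), and handling the non-divisible case $(s-1)\nmid(n-1)$, where $t$ is not an integer and ``size at most $t$'' means ``size at most $\lfloor t\rfloor$''. The need for an \emph{exact} structural description in the ``close'' branch is what forces $n$ to be large, matching the remark that the small-$n$ case is left open.
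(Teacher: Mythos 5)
Your skeleton --- induction on $s$ with de Bruijn--Erd\H os as the base case, plus the restriction principle that deleting a set $Z$ meeting every $(s-1)$-clique of the uncovered graph $G$ leaves an $(s-1)$-cover on $W=X\setminus Z$ whose line sizes satisfy the inductive size constraint when $|Z|\le t$ --- is exactly the engine of the paper's argument (it is Lemma~\ref{bounds}.2 and drives Cases 4 and 5 there, with $Z$ taken to be the union of the lines through a minimum-degree vertex of the largest line $A_1$, or $A_1\cup\{x_1\}$). The arithmetic you do with $n-1-t(s-2)=t$ and the accounting $|\mathcal L|=|\mathcal L[W]|+\#\{L:|L\cap W|\le1\}$ is also correct. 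But the step you flag as ``the crux'' is a genuine gap, and the dichotomy you propose to fill it cannot work as framed. The reason is that the grid of Figure~\ref{picture} is \emph{not} the only near-extremal configuration: the paper's own asymptotic tightness construction takes $s-2$ pairwise disjoint blocks of size roughly $t$ together with a projective plane of order $q\approx\sqrt{n/(s-1)}$ on the remaining $\approx t$ points, which is an $s$-cover with $q^2+q+1+s-2=(1+o(1))\bigl((n-1)/(s-1)+s-1\bigr)$ lines. This configuration is as far from the grid as possible (its largest ``short'' lines have size $\approx\sqrt{n/(s-1)}$, not $t$ or $s$), yet your ``far from extremal'' branch must somehow push it strictly above $t+s-1$, and the double count $\sum_v\binom{r_v}{2}\le\binom{|\mathcal L|}{2}$ you propose is satisfied by it with slack of order $n$ versus $n^2$; the Tur\'an count $\sum_i\binom{a_i}{2}\ge n^2/(2(s-1))-n/2$ likewise only forces $m\gtrsim n/a_1^2\cdot n/(s-1)$, which degenerates to $\approx t$ exactly when $a_1\approx\sqrt n$. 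This $\sqrt n$-regime is where the paper spends most of its effort (Case 2, with the two competing pair-counts in $P$ and in $Q$ and Claim~\ref{claimcover}), and your proposal contains no mechanism for it.

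Two further unproved assertions carry essentially all the remaining weight: that a clique-transversal $Z$ of size at most $t$ exists at all, and that when $t-(s-2)<\tau\le t$ there is a line swallowed by $Z$. In the paper these are not generic facts about $s$-covers but consequences of first pinning down $a_1=|A_1|$ to within $O(1)$ of $t$ (Cases 1--3 eliminate everything else), after which $Z$ can be taken to be essentially $A_1$ itself and the swallowed line is $A_1$; even then, Case 5 needs an iterative peeling of up to $s-3$ further $\ell$-sets and a final appeal to the de Bruijn--Erd\H os equality cases (near pencil versus projective plane) to gain the last $+1$. So the proposal correctly identifies the inductive framework but leaves open the parts of the proof that are actually hard, and the stability-around-the-grid strategy it offers in their place is contradicted by the projective-plane-type extremal examples.
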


\subsection{Notation and Lemmas}
Let $\mathcal{L}=\left\{A_1,A_2,\ldots, A_m \right\}$ be an $s$-cover on $X:=[n]$. Assume $|A_i|=a_i$ and $(n-1)/(s-1)\ge |A_1| \ge |A_2| \ge \cdots \ge |A_m|$. For $x \in X$, the degree of $x$, written $d(x)$, is the number of $A_i$ that contain $x$ and the neighborhood of $x$, written $N(x)$, is the collection of $A_i$'s that contain $x$. Let $d = \min_{w \in A_1} d(w)$ and suppose that $d(v)=d$ where $v \in A_1$. Let $\left\{A_{i_1},\ldots, A_{i_d} \right\}$ be the neighborhood of $v$ where $i_1=1$ and let $Q:=[n] \setminus \bigcup_{j=1}^d A_{i_j}$ and $P:=\bigcup_{j=1}^d A_{i_j}$. Set $p:=|P|$ (see Figure \ref{P and Q}). Throughout the proof, we say a subset $J \subset [n]$ is covered if there exists some $A_i$ containing $J$.

We first prove a lemma giving various bounds on $m$ depending on $d$, the $a_i$'s, and $|Q|$.  We will assume below that Theorem~\ref{mainthmstrong} holds for all $s^{\prime} \le s-1$ by induction on $s$ and that $n$ is sufficiently large in terms of $s$ to apply induction and any further inequalities that require this. More explicitly, we will show that if  $n \ge n_0(s)$ where $n_0(s)$ is large enough for the inequalities we use in the proof to hold,  then $|\mathcal{L}| \ge (n-1)/(s-1)+s-1$.

The base case $s=2$ of the induction follows from the de Bruijn-Erd\H os theorem, so we assume $s>2$. We note that the stronger statement of Theorem~\ref{mainthmstrong} holds only for $s \ge 3$, and we will take care of the specific case $s=3$ during the proof.

Let $\delta,\delta_1, \delta_2, \delta_3,\delta_4 >0$ be constants that follow the hierarchy
$$\frac{1}{C_1^{1/10}} \ll \delta_2 \ll \delta_1 \ll \delta \ll \delta_4 \ll \delta_3  \ll \frac{1}{s^2}$$
 where $\xi \ll \eta $ simply means that $\xi$ is a sufficiently small function of $\eta$ that is needed to satisfy some inequality in the proof. In particular,  set
 $$\delta_3 =\delta^{1/4} \qquad \hbox{ and } \qquad \delta_4 = \delta^{1/2}.$$
 We will repeatedly use the fact that if $\beta>1/10$ and $0<\alpha<8\beta$, 
\begin{equation} \label{s/n bound}
    \frac{s^{\alpha}}{n^{\beta}} \le \frac{s^{\alpha}}{(C_1 s^8)^{\beta}} \le  \frac{s^{\alpha-8\beta}}{C_1^\beta} \le \frac{1}{C_1^{\beta}} \ll  \delta_2.
\end{equation}
   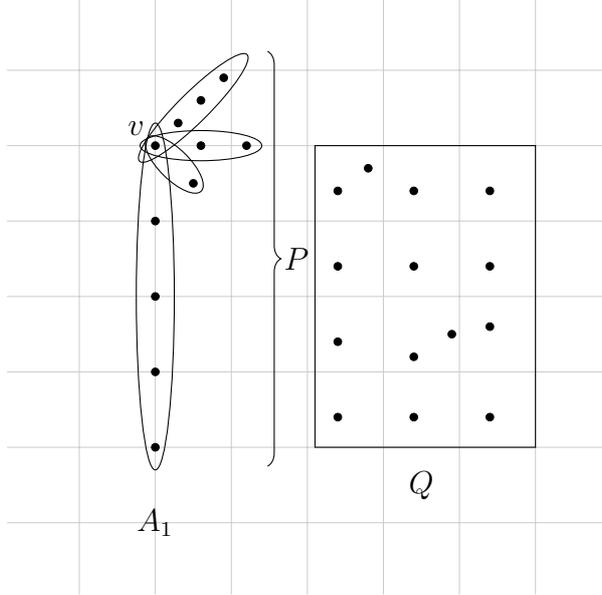
\begin{figure}
   \begin{center}
      \begin{tikzpicture}

        \draw[step=1,help lines,black!20] (-0.95,-0.95) grid (6.95,6.95);
        
        \foreach \Point/\PointLabel in {((1,1)/, (1,2)/, (1,3)/, (1,4)/, (1,5)/v,(1.3,5.3)/,(1.6,5.6)/,(1.9,5.9)/,(1.6,5)/,(2.2,5)/,(1.5,4.5)/,(3.4,1.4)/,(4.4,1.4)/,(5.4,1.4)/,(3.4,2.4)/,(4.4,2.2)/,(5.4,2.6)/,(3.4,3.4)/,(4.4,3.4)/,(5.4,3.4)/,(3.4,4.4)/,(4.4,4.4)/,(5.4,4.4)/,(3.8,4.7)/,(4.9,2.5)/}
        \draw[fill=black] \Point circle (0.05) node[above left]
        {$\PointLabel$};
        \draw (1,3) ellipse (0.25cm and 2.3cm) node at (1,0)  {$A_1$};
        \draw[rotate around={-45:(1.5,5.5)}] (1.5,5.5) ellipse (0.2cm and 1.0cm);
        \draw[rotate around={90:(1.6,5)}] (1.6,5) ellipse (0.2cm and 0.8cm);
        \draw[rotate around={45:(1.25,4.75)}] (1.25,4.75) ellipse (0.2cm and 0.5cm);
        \draw [decorate,decoration={brace,amplitude=5pt,mirror,raise=4ex}]
  (1.75,0.75) -- (1.75,6.25) node[midway,right=0.8]{$P$};
        \draw [draw=black] (6,5) rectangle (3.1,1) node at (4.5,0.5)  {$Q$};
    \end{tikzpicture}  
    \end{center}
    \caption{Setup for Lemma \ref{bounds}.} \label{P and Q}
   \end{figure}
   
\begin{lemma} \label{bounds}
The following bounds hold for $n \ge n_0(s)$.
\begin{enumerate}
\item $m \ge a_1 (d-1)+1$
 \item If $|Q|> n(s-2)/(s-1)$, then $$m \ge \frac{|Q|-1}{s-2}+s-2+d.$$  
\item  For  distinct $i_1,i_2,...,i_s$, let $A_{i_k}^{\prime} \subset A_{i_k}$ be pairwise disjoint subsets, $a_{i_k}^{\prime}=|A_{i_k}^{\prime}|$.  Then, 
$$m \ge \frac{a_{i_1}^{\prime}a_{i_2}^{\prime}a_{i_3}^{\prime}...a_{i_s}^{\prime}}{e_{s-2}(a_{i_1}^{\prime},a_{i_2}^{\prime},a_{i_3}^{\prime},\ldots ,a_{i_s}^{\prime})}$$ 
where $e_k(x_1,\ldots,x_n)=\sum_{1 \le i_1 < \cdots  <i_k \le n} x_{i_1}\cdots x_{i_k}$ is the $k$-th elementary symmetric polynomial. 

\item $\sum_{i=1}^m \binom{a_i}{2} \ge \frac{n^2}{2(s-1)}-\frac{n}{2}$. 
 \item Suppose $a_1 \ge (1-\delta) \cdot \sqrt{n}$. Write $n-1+(s-1)(s-2)=(s-1)a_1 q + r$ for integers $q,r$ with $0 \le r < (s-1)a_1$.
 If $d \notin \left\{q,q+1 \right\} $, then $m > \frac{n-1}{s-1}+s-1$.

\end{enumerate}
\end{lemma}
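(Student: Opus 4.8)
The plan is to prove each of the five bounds, but let me focus on part 5 since that is the one asked for and it appears to be the most delicate. Set $a := a_1$. By part 1 we already have $m \ge a(d-1)+1$, which is an increasing function of $d$, so the difficulty is only on the side where $d$ is \emph{too small}, i.e.\ $d \le q-1$; the case $d \ge q+2$ should follow quickly from part 1 once we check that $a(q+1)+1 > (n-1)/(s-1)+s-1$ under the hypothesis $a \ge (1-\delta)\sqrt n$. Indeed, with $(s-1)aq \le n-1+(s-1)(s-2)$ we get $aq \lesssim n/(s-1)$, so $a(q+1) = aq + a \gtrsim n/(s-1) + (1-\delta)\sqrt n$, which beats $(n-1)/(s-1)+s-1$ for $n$ large. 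So the real content is: \textbf{if $d \le q-1$, then $m$ is large.}

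For the small-$d$ case I would count pairs covered inside $P = \bigcup_{j=1}^d A_{i_j}$ versus pairs that must be covered by lines meeting $Q$. Since $v \in A_1$ has degree exactly $d = \min_{w\in A_1} d(w)$, every vertex of $A_1$ has degree $\ge d$; more importantly, $P$ consists of the union of $d$ lines through $v$, each of size at most $a$, so $p = |P| \le d(a-1)+1$. When $d \le q-1$ this forces $p \le (q-1)(a-1)+1$, and combined with $(s-1)aq \le n-1 + (s-1)(s-2)$ one computes that $|Q| = n - p$ is still a positive fraction of $n$ — in fact I expect $|Q| > n(s-2)/(s-1)$ to hold once $d$ drops below $q$, which would let us invoke part 2 directly to get $m \ge (|Q|-1)/(s-2) + s-2 + d$. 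Plugging the lower bound on $|Q|$ (which grows as $d$ shrinks) into this inequality should already yield $m > (n-1)/(s-1)+s-1$. The threshold $d \in \{q, q+1\}$ is exactly the regime where neither part 1 (for $d$ large) nor part 2 (for $d$ small, via $|Q|$ large) gives a strong enough bound, which is why the lemma excludes it.

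The key steps, in order: (i) record $p \le d(a-1)+1$ from the structure of $P$ as a union of $d$ lines through $v$; (ii) handle $d \ge q+2$ via part 1, checking the arithmetic $a(q+1)+1 > (n-1)/(s-1)+s-1$ using $a \ge (1-\delta)\sqrt n$ and the definition of $q$; (iii) for $d \le q-1$, bound $|Q| = n - p \ge n - (q-1)(a-1) - 1$ from below, verify this exceeds $n(s-2)/(s-1)$ (here the division-with-remainder identity $n-1+(s-1)(s-2) = (s-1)aq + r$ is used to control $q$ in terms of $n/((s-1)a)$), then apply part 2; (iv) simplify $(|Q|-1)/(s-2) + s-2 + d > (n-1)/(s-1)+s-1$, again using $a = \Theta(\sqrt n)$ so that the $\sqrt n$-scale error terms are absorbed for $n$ large.

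The main obstacle I anticipate is step (iii)–(iv): making the chain of inequalities tight enough. The bound $p \le d(a-1)+1$ is generally not sharp — lines through $v$ can overlap pairwise outside $v$ only in single points, but they still typically share little, so $p$ is close to $d(a-1)+1$ only in extremal cases; conversely if $p$ is much smaller than $d(a-1)+1$ then $|Q|$ is even larger and part 2 is even stronger, so the worst case for us is precisely $p \approx d(a-1)+1$, and one must check the resulting inequality survives with the $+s-1$ and the various $-1$'s and the gap between $\sqrt n$ and the $s$-dependent constants. I would be careful that the hypothesis is $a_1 \ge (1-\delta)\sqrt n$ with no matching \emph{upper} bound forced here beyond $a_1 \le (n-1)/(s-1)$, so both the regime $a_1 \approx \sqrt n$ and $a_1$ much larger than $\sqrt n$ must be covered; when $a_1$ is large, $q$ is small (possibly $q \le s$), and then $d \le q-1$ is a very restrictive hypothesis that should make part 2 trivially sufficient, while when $a_1 \approx \sqrt n$ we have $q \approx \sqrt n /(s-1)$ and the counting above is the crux.
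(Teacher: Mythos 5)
Your treatment of part 5 is essentially identical to the paper's proof: the case $d \ge q+2$ is dispatched by part 1 together with the division identity $n-1+(s-1)(s-2)=(s-1)a_1q+r$ (indeed $a_1(q+1)+1 = \tfrac{n-1}{s-1}+s-1+a_1-\tfrac{r}{s-1} > \tfrac{n-1}{s-1}+s-1$ since $r<(s-1)a_1$), and the case $d \le q-1$ is handled exactly as you outline, via $|Q| \ge n-d(a_1-1)-1 > \tfrac{s-2}{s-1}n + q + a_1 - s > \tfrac{s-2}{s-1}n$ (using $a_1 \ge (1-\delta)\sqrt{n}$ to absorb the $-s$) followed by part 2. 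Be aware that you have only sketched part 5 and left parts 1--4 unaddressed; they are routine (direct counting of sets through $A_1$, induction on $s$ applied to the traces on $Q$, a double count of the $s$-sets transversal to the $A_{i_k}'$, and Tur\'an's theorem, respectively) but still need to be written out.
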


\begin{proof} We prove each statement of the lemma.

1.  The number of sets containing a vertex in $A_1$ is at least $a_1 (d-1)+1$ as $d$ is minimum degree of vertices in $A_1$.

 2.  Observe that  
 $$\frac{|Q|-1}{s-2}>\frac{\frac{n(s-2)}{s-1}-1}{s-2}=\frac{n}{s-1}-\frac{1}{s-2}$$
This implies $(|Q|-1)(s-1) \ge n(s-2)-(s-1)+1=(n-1)(s-2)$. Rearranging, we get $(n-1)/(s-1) \le (|Q|-1)/(s-2)$. This shows the size of all sets not in $N(v)$ is less than $(|Q|-1)/(s-2)$. Also,
 $$|Q| > \frac{s-2}{s-1}n \ge \frac{s-2}{s-1}n_0(s)
\ge n_0(s-1).$$
For any  $s-1$ distinct vertices  $x_1, \ldots, x_{s-1} \in Q$, there must be a set $A_t$ containing a pair from $x_1,\ldots, x_{s-1}$ as the $s$-set $\{x_1,\ldots, x_{s-1}, v\}$ must be covered and $x_1,\ldots,x_{s-1} \notin P$. Therefore the collection of sets $A_i \setminus P$ that have at least one point in $Q$ is an $(s-1)$-cover of $Q$.  By the induction hypothesis,  the number of these sets is at least $(|Q|-1)/(s-2)+s-2$. Hence $m \ge (|Q|-1)/(s-2)+s-2+d$ as there are $d$ sets containing $v$ in addition to these $A_i$.

3.  Consider the collection of $s$-sets  $B=\{\{x_1, \ldots, x_s\}: x_j \in A_{i_j}', j=1,\ldots, s\}$. A particular $A_t$ covers at most $e_{s-2}(a_{i_1}^{\prime},\ldots, a_{i_s}^{\prime})$ such $s$-sets as it has at most one point in each of $A_{i_1}$,\ldots, $A_{i_s}$. The number of $s$-sets in $B$ is $a'_{i_1}\cdots a_{i_s}'$. It follows that 
$$m \ge \frac{a_{i_1}' \cdots a_{i_s}'}{e_{s-2}(a_{i_1}^{\prime},\ldots ,a_{i_s}^{\prime})}.$$ 

4. Let $G=([n],E)$ be the graph where $E$ is the set of pairs not contained in any $A_i$. Then $G$ is $K_s$-free. Since every pair in $[n]$ is either in some $A_i$ or in $E$, we have 
$$\binom{n}{2}=\sum_{i=1}^m \binom{a_i}{2}+|E|$$
Since $|E| \le n^2/2 \cdot (1-1/(s-1))$ by Tur\'an's theorem, 
$$\sum_{i=1}^m \binom{a_i}{2}  \ge \binom{n}{2}-\frac{n^2}{2}\left(1 - \frac{1}{s-1}\right)=\frac{n^2}{2(s-1)}-\frac{n}{2}.$$

5. We have $n-1+(s-1)(s-2)=(s-1)a_1q+r$
for integers $q,r$ with $0 \le r< (s-1)a_1$. If $d \ge q+2$, then part 1 implies 
\begin{align*}
 m
 & \ge a_1(q+1)+1 \\
 & = \frac{n-1+(s-1)(s-2)-r}{s-1}+a_1+1 \\
 & = \frac{n-1}{s-1}+s-1+a_1-\frac{r}{s-1} \\
 & > \frac{n-1}{s-1}+s-1.
\end{align*}
If $d \le q-1$, then
\begin{align*}
|Q|
& \ge n-d(a_1-1)-1 \\
& \ge n-(q-1)(a_1-1)-1 \\
& = n-qa_1+q+a_1-1-1 \\
& = n-\frac{n-1+(s-1)(s-2)-r}{s-1}+q+a_1-2 \\
& > \frac{s-2}{s-1}n-(s-2)+q+a_1-2  \\
& = \frac{s-2}{s-1}n+q+a_1-s.
\end{align*}
Note that
$$a_1 - s \ge (1-\delta)\sqrt{n_0(s)}-s > s.$$

It follows that $q+a_1-s\ge s$, so 
$|Q|>n(s-2)/(s-1) +s.$
By part 2 $$m 
\ge \frac{|Q|-1}{s-2}+s-2 
\ge \frac{\frac{s-2}{s-1}n+ s}{s-2}+s-2  > \frac{n-1}{s-1}+s-1.$$
\end{proof}

{\bf Proof of Lower Bound for Theorem~\ref{mainthm}.} 

We prove Theorem 1 by considering different ranges for $a_1$.  Let $n \ge n_0(s)$ and set $\varepsilon=1/10s^2$.

{\bf Case 1:} $a_1 < (1-\delta)\sqrt n$.

 From Lemma~\ref{bounds}.4, we have $\sum_{i=1}^m \binom{a_i}{2} \ge n^2/2(s-1)-n/2$. Since $\sum_{i=1}^m \binom{a_i}{2} \le m a_1^2/2< m (1-\delta)^2n/2$, we have 
 $$\frac{(1-\delta)^2}{2}nm \ge \frac{n^2}{2(s-1)}-\frac{n}{2}.$$
This implies 
$$m \ge \frac{\frac{n^2}{2(s-1)}-\frac{n}{2}}{\frac{(1-\delta)^2}{2}n}=\frac{1}{(1-\delta)^2}\left( \frac{n}{s-1}-1\right).$$
For $n\ge n_0(s)$, this is greater than $(n-1)/(s-1)+s-1$.

{\bf Case 2:} $(1-\delta)\sqrt{n} \le a_1 \le 10 \sqrt{sn}$.

By Lemma~\ref{bounds}.5, we can assume $d$ is either $q$ or $q+1$. Suppose $|Q|>n\left(1-1/(s-1)\right)$. Then by Lemma~\ref{bounds}.2
 $$m 
\ge \frac{|Q|-1}{s-2}+s-2+d  > \frac{n}{s-1}-\frac{1}{s-2}+s-2+d.$$

Since 
$$\frac{n-1}{(s-1)a_1} < \frac{n-1}{(s-1)a_1}+\frac{s-2}{a_1}=q+\frac{r}{(s-1)a_1}<q+1,$$
we have
\begin{equation} \label{dlowerbound}
 d \ge q \ge \frac{n-1}{a_1(s-1)} -1\ge \frac{n-1}{10\sqrt{s}(s-1)\sqrt{n} } -1.
\end{equation}
Hence, for $n\ge n_0(s)$ 
$$m>\frac{n}{s-1}-\frac{1}{s-2}+s-2+d  > \frac{n-1}{s-1}+s-1.$$
Therefore $|Q|\le n (1-1/(s-1))$ and $p:=|P|=n-|Q| \ge n/(s-1)$. By Turan's theorem, at least $p^2/2(s-1)-p/2$ of the pairs in $P$ must be covered. The number of pairs covered by  $A_{i_1},\ldots,A_{i_d}$ is
$$ \sum_{j=1}^d \binom{a_{i_j}}{2} \le d \binom{a_1}{2} \le \frac{a_1^2 d}{2}.$$
Since a set $A_i$ in $\mathcal{L} \setminus N(v)$ has at most $d$ points from $P$, it covers at most $\binom{d}{2}$ pairs. So
    \begin{equation} \label{eqnmbound}m \ge \frac{\frac{p^2}{2(s-1)}-\frac{p}{2}-\sum_{j=1}^d \binom{a_{i_j}}{2}}{\frac{d^2}{2}}\ge\frac{p^2}{(s-1)d^2} \left(1 -\frac{s-1}{p} -\frac{a_1^2 d (s-1)}{p^2}\right).\end{equation}

Note that
$$\frac{a_1^2 d (s-1)}{p^2} \le \frac{100s n d(s-1)}{n^2/(s-1)^2} = \frac{100ds(s-1)^3}{n}.$$
Observe that 
\begin{equation}\label{eqndbound}d \le q+1 \le \frac{n-1}{a_1(s-1)}+\frac{s-2}{a_1}+1\end{equation}
and 
\begin{equation} \label{eqndboundsimplified}
     \begin{split}
       \frac{n-1}{a_1(s-1)}+\frac{s-2}{a_1}+1
       & \le \frac{n-1}{(1-\delta)\sqrt{n}(s-1)}+\frac{s-2}{(1-\delta)\sqrt{n}}+1 \\
       & \le \frac{\sqrt{n}}{(1-\delta)(s-1)}+\delta_2+1.
     \end{split}
\end{equation}
Consequently, by (\ref{eqndbound}),
$$d(s-1) \le \frac{\sqrt{n}}{1-\delta}+(s-1)(1+\delta_2).$$
This and (\ref{s/n bound}) imply
\begin{align*}
\frac{100ds(s-1)^3}{n}
& \le \frac{100s^3}{n} d(s-1) \\
& \le \frac{100s^3}{n}\left(\frac{\sqrt{n}}{1-\delta}+(s-1)(1+\delta_2) \right) \\
& =\frac{100s^3}{\sqrt{n}(1-\delta)}+\frac{100s^3(s-1)(1+\delta_2)}{n} \\
& \le \frac{\delta_2}{2}.
\end{align*}
It follows that $a_1^2 d (s-1)/p^2 \le \delta_2/2$. As $p\ge n/(s-1)$, we have $(s-1)/p \le (s-1)^2/n \le s^2/n\le \delta_2/2 $ by (\ref{s/n bound}) and hence by (\ref{eqnmbound})
\begin{equation} \label{mlowerbound}
m \ge (1-\delta_2)\frac{p^2}{(s-1)d^2}.
\end{equation}
We will now prove a lower bound for $p^2/(s-1)d^2$. By (\ref{eqndbound}), 
$$d(s-1) \le \frac{n-1}{a_1}+\frac{(s-2)(s-1)}{a_1}+s-1<\frac{n}{a_1}+\frac{s^2}{a_1}+s,$$
and this, $a_1 \le 10 \sqrt{sn}$, and (\ref{s/n bound}) yield
\begin{equation} \label{p^2/d^2bound}
\begin{split}
\frac{p^2}{(s-1)d^2}
& \ge \frac{(n/(s-1))^2}{(s-1)d^2} \\
& = \frac{n^2}{s-1} \frac{1}{d^2(s-1)^2} \\
& \ge \frac{n^2}{s-1}\left(\frac{n}{a_1}+\frac{s^2}{a_1}+s \right)^{-2} \\
& = \frac{a_1^2}{s-1} \left(1+\frac{s^2}{n}+\frac{a_1 s }{n} \right)^{-2} \\
& \ge \frac{a_1^2}{s-1} \left( 1+ \frac{s^2}{n}+\frac{10s^{3/2}}{\sqrt{n}}\right)^{-2} \\
& \ge \frac{a_1^2}{s-1}(1+\delta_2)^{-2}.
\end{split}
\end{equation}
 Combining (\ref{p^2/d^2bound}) and (\ref{mlowerbound}), we get
 \begin{equation} \label{simplifiedmbound}
     m \ge \frac{1-\delta_2}{(1+\delta_2)^2} \frac{a_1^2}{s-1}.
 \end{equation}
{\bf Case 2.1:} $a_1 \ge (1+\delta_1)\sqrt{n}$.

By (\ref{simplifiedmbound}), 
$$m \ge (1+\delta_1)^2 \frac{1-\delta_2}{(1+\delta_2)^2}\frac{n}{s-1}.$$
Since $\delta_1 \gg \delta_2$, we have $(1+\delta_1)^2 (1-\delta_2)/(1+\delta_2)^2>1$. It follows that $m>(n-1)/(s-1)+s-1$ for $n \ge n_0(s)$.

{\bf Case 2.2:} $a_1 < (1+\delta_1)\sqrt{n}$.

We can assume that there are at most $n/(s-1)$ sets in $\mathcal{L} \setminus N(v)$ as otherwise  $m \ge n/(s-1)+d>(n-1)/(s-1)+s-1$ by  (\ref{dlowerbound}).  Recall that $\delta_3=\delta^{1/4}$ and $\delta_4 = \delta^{1/2}$.

\begin{claim} \label{claimcover}
At least $ (1-\delta_3)n/(s-1)$ sets in $\mathcal{L} \setminus N(v)$ have at least $(1/(s-1)-\delta_4)\sqrt{n}$ points in $P$.
\end{claim}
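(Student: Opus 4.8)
The plan is to double-count the pairs of points lying inside $P$ that are covered by some line of $\cL$. Write $R:=\cL\setminus N(v)$, and for $A_i\in R$ put $b_i:=|A_i\cap P|$; since $A_i$ meets each of the $d$ lines through $v$ in at most one point, $b_i\le d$. Every pair inside $P$ lies in exactly one line of $\cL$, so the number of such covered pairs equals $\sum_{A_i\in R}\binom{b_i}{2}+\sum_{j=1}^{d}\binom{a_{i_j}}{2}$; by Tur\'an's theorem applied to the $K_s$-free graph induced on $P$ (as in Lemma~\ref{bounds}.4) it is at least $p^2/(2(s-1))-p/2$. Using $p\ge n/(s-1)$ and $\sum_{j}\binom{a_{i_j}}{2}\le d\binom{a_1}{2}=O(n^{3/2})$ — where $a_1<(1+\delta_1)\sqrt n$ and $d\le \sqrt n/((1-\delta)(s-1))+O(1)$ by~(\ref{eqndboundsimplified}) — I obtain
\[
\sum_{A_i\in R}\binom{b_i}{2}\ \ge\ \frac{p^2}{2(s-1)}-\frac p2-\sum_{j=1}^{d}\binom{a_{i_j}}{2}\ \ge\ \frac{n^2}{2(s-1)^3}-o(n^2),
\]
where $o(n^2)$ denotes a quantity absorbed once $n$ is large in terms of $s$ and the $\delta_i$.

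Next I would play this against the trivial bound $\binom{b_i}{2}\le\binom d2$. From $d\ge q\ge \sqrt n/((1+\delta_1)(s-1))-O(1)$ together with the upper bound on $d$ above,
\[
\frac{n}{2(1+\delta_1)^2(s-1)^2}-O(\sqrt n)\ \le\ \binom d2\ \le\ \frac{n}{2(1-\delta)^2(s-1)^2}+O(\sqrt n).
\]
Let $R_{\mathrm{bad}}:=\{A_i\in R:\ b_i<(1/(s-1)-\delta_4)\sqrt n\}$. For $A_i\in R_{\mathrm{bad}}$ we have $\binom{b_i}{2}<\tfrac12(1/(s-1)-\delta_4)^2n=\tfrac{n}{2(s-1)^2}-\tfrac{\delta_4 n}{s-1}+\tfrac{\delta_4^2 n}{2}$, and combining this with the lower bound on $\binom d2$, using $\delta_1\ll\delta_4$ and that $\delta_4^2=\delta$ is negligible next to $\delta_4/(s-1)$, gives $\binom d2-\binom{b_i}{2}\ge \delta_4 n/(4(s-1))$ for $n$ large. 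Hence, using the reduction $|R|\le n/(s-1)$ made just before the claim,
\[
\frac{n^2}{2(s-1)^3}-o(n^2)\ \le\ \sum_{A_i\in R}\binom{b_i}{2}\ \le\ |R|\binom d2-|R_{\mathrm{bad}}|\cdot\frac{\delta_4 n}{4(s-1)}\ \le\ \frac{n^2}{2(1-\delta)^2(s-1)^3}+o(n^2)-|R_{\mathrm{bad}}|\cdot\frac{\delta_4 n}{4(s-1)}.
\]
Since $1/(1-\delta)^2-1\le 3\delta$, this rearranges to $|R_{\mathrm{bad}}|\cdot\frac{\delta_4 n}{4(s-1)}\le \frac{2\delta n^2}{(s-1)^3}$, that is $|R_{\mathrm{bad}}|\le \frac{8\delta n}{\delta_4(s-1)^2}=\frac{8\delta_4 n}{(s-1)^2}$, using $\delta/\delta_4=\delta_4$.

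To finish, I would bound $|R|$ from below. By~(\ref{simplifiedmbound}) and $a_1\ge(1-\delta)\sqrt n$ we get $m\ge(1-3\delta)n/(s-1)$, and since $d=O(\sqrt n)\le\delta n/(s-1)$ for $n$ large, $|R|=m-d\ge(1-4\delta)n/(s-1)$. Therefore the number of lines of $\cL\setminus N(v)$ with at least $(1/(s-1)-\delta_4)\sqrt n$ points in $P$ is
\[
|R|-|R_{\mathrm{bad}}|\ \ge\ \Bigl(1-4\delta-\frac{8\delta_4}{s-1}\Bigr)\frac{n}{s-1}\ \ge\ (1-\delta_3)\frac{n}{s-1},
\]
because $\delta\ll\delta_4\ll\delta_3$ forces $4\delta+8\delta_4/(s-1)\le\delta_3$. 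This is precisely the assertion of the claim. The step I expect to be the main obstacle is that this chain is essentially tight: the Tur\'an lower bound on the number of covered pairs inside $P$ and the quantity $|R|\binom d2$ agree to leading order $n^2/(2(s-1)^3)$, so the only slack is of order $\delta n^2$, arising from the $(1\pm\delta)$-type perturbations and the $o(n^2)$ error terms. Extracting the structural conclusion from this slack requires that each ``bad'' line lose $\Theta(\delta_4 n)$ covered pairs relative to the maximum $\binom d2$, which in turn forces $d$ to be pinned to within a factor $1\pm\delta_1$ of $\sqrt n/(s-1)$ — exactly why Case~2.2 assumes $a_1<(1+\delta_1)\sqrt n$ — and $|R|$ to within an additive $O(\delta n/(s-1))$ of $n/(s-1)$; keeping all these error terms mutually consistent with the hierarchy $\delta_2\ll\delta_1\ll\delta\ll\delta_4\ll\delta_3$ is the delicate bookkeeping at the heart of the proof.
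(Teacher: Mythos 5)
Your argument is correct and is essentially the paper's proof: both double-count the covered pairs inside $P$ against the Tur\'an lower bound $p^2/(2(s-1))-p/2$, using $|A\cap P|\le d$ for sets outside $N(v)$ together with $d=(1+O(\delta_1))\sqrt n/(s-1)$, and charging each set with fewer than $(1/(s-1)-\delta_4)\sqrt n$ points of $P$ a deficit of order $\delta_4 n/(s-1)$. The only difference is organizational: the paper runs the count as a contradiction (assuming fewer than $(1-\delta_3)n/(s-1)$ good sets makes the total coverage fall below the Tur\'an bound), whereas you bound $|R_{\mathrm{bad}}|$ directly and therefore also need the lower bound $m\ge(1-3\delta)n/(s-1)$ from~(\ref{simplifiedmbound}) to convert this into a count of good sets --- an extra (readily available) ingredient the paper's version does not use.
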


\begin{proof} Assume this is not true. Since every   $A \in \mathcal{L} \setminus N(v)$ has at most one point in common with every set in $N(v)$, we conclude that $|A\cap P|\le d$.
Hence the number of covered pairs in $P$  is at most 
\begin{equation} \label{pairsinP}
    (1-\delta_3)\frac{n}{s-1}\frac{d^2}{2}+\frac{\delta_3 n}{s-1}\left(\frac{1}{s-1}-\delta_4 \right)^2 \frac{n}{2}+\sum_{j=1}^d \binom{a_{i_j}}{2}.
\end{equation}
Recall that $d \le \sqrt{n}/((1-\delta)(s-1))+1+\delta_2$ by (\ref{eqndboundsimplified}). By this bound and (\ref{s/n bound}) we have
\begin{equation} \label{firstterm}
    \begin{split} 
        (1-\delta_3)\frac{n}{(s-1)}\frac{d^2}{2} 
        & \le (1-\delta_3) \frac{n}{2(s-1)}\left(\frac{\sqrt{n}}{(1-\delta)(s-1)}+\delta_2+1 \right)^2 \\
        & = (1-\delta_3)\frac{n}{2(s-1)} \frac{n}{(1-\delta)^2(s-1)^2}\left(1+\frac{(\delta_2+1)(1-\delta)(s-1)}{\sqrt{n}}\right)^2 \\
        & \le \frac{1-\delta_3}{(1-\delta)^2}\frac{n^2}{2(s-1)^3}(1+\delta_2)^2.
    \end{split}
\end{equation}
We also have
\begin{equation} \label{secondterm}
    \frac{\delta_3 n}{s-1}\left( \frac{1}{s-1}-\delta_4\right)^2 \frac{n}{2} =\frac{\delta_3 n^2}{2(s-1)^3}  \left(1-\delta_4 (s-1)\right)^2.
\end{equation} 
Since $a_1<(1+\delta_1)\sqrt{n}$ and $d \le 2 \sqrt{n}/((1-\delta)(s-1))$ by (\ref{eqndbound}), we have
\begin{equation} \label{thirdterm}
    \begin{split}
        \sum_{j=1}^d \binom{a_{i_j}}{2} 
        & \le \frac{a_1^2 }{2} d\\
        & \le \frac{(1+\delta_1)^2 n }{2}  \frac{2\sqrt{n}}{(1-\delta)(s-1)} \\
        & \le \frac{n^2}{2(s-1)^3} \frac{2(s-1)^2(1+\delta_1)^2}{(1-\delta)\sqrt{n}} \\
        & \le \delta_2 \frac{n^2}{2(s-1)^3}.
    \end{split}
\end{equation}
Note that we used (\ref{s/n bound}) in the last step. Combining (\ref{firstterm}), (\ref{secondterm}), and (\ref{thirdterm}), we deduce that the number of covered pairs in $P$ is at most 
\begin{equation} \label{pairsP}
    \frac{n^2}{2(s-1)^3} \left(\frac{(1+\delta_2)^2(1-\delta_3)}{(1-\delta)^2}+\delta_3\left( 1-\delta_4 (s-1)\right)^2+\delta_2\right).
\end{equation}
As $\delta \gg \delta_2$, we obtain 
\begin{align*}
     \frac{(1+\delta_2)^2(1-\delta_3)}{(1-\delta)^2} 
     & \le (1+3\delta_2)(1+3\delta)(1-\delta_3) \\
     & \le (1+4\delta)(1-\delta_3) \\
     & = 1 - \delta_3 (1+4\delta-4\delta / \delta_3).
\end{align*}
 As $\delta_3 \delta_4 = \delta^{1/4}\delta^{1/2} =\delta^{3/4} \gg \delta$, we have $4\delta +\delta_4(s-1)/2
 >4\delta/\delta_3$ and hence 
$$1 - \delta_3 (1+4\delta-4\delta / \delta_3) \le 1-\delta_3(1-\delta_4(s-1)/2).$$ We also have $\delta_3\left( 1-\delta_4 (s-1)\right)^2 \le \delta_3\left( 1-\delta_4 (s-1)\right)$. It follows that (\ref{pairsP}) is at most 
\begin{align*} & \frac{n^2}{2(s-1)^3}\left(1-\delta_3\left(1-\frac{\delta_4(s-1)}{2}\right)+\delta_3\left( 1-\delta_4 (s-1)\right)+\delta_2\right) \\ \notag
& \le \frac{n^2}{2(s-1)^3}\left(1-\frac{\delta_3 \delta_4 (s-1)}{2}+\delta_2 \right). \notag \end{align*}
Note that $1-\delta_3 \delta_4 (s-1)/2+\delta_2<1$ as $\delta_3 \delta_4 \gg \delta \gg \delta_2$. Since $p^2/2(s-1) \ge n^2/2(s-1)^3$, this implies that for $n \ge n_0(s)$ the number of covered pairs in $P$ is less that $p^2/2(s-1)-p/2$. This contradiction completes the proof of the claim.
\end{proof}
Suppose $A \in \mathcal{L} \setminus N(v)$. If $A$ has at least $(1/(s-1)-\delta_4) \sqrt{n}$ points in $P$, then it has at most $(1+\delta_1-1/(s-1)+\delta_4)\sqrt{n}$ points in $Q$ as $a_1<(1+\delta_1)\sqrt{n}$. Hence, by Claim \ref{claimcover} the number of covered pairs in $Q$ is at most 
$$\frac{\left(1-\delta_3\right)n}{s-1}\binom{(1+\delta_1-1/(s-1)+\delta_4)\sqrt{n}}{2} + \frac{\delta_3 n}{s-1} \binom{(1+\delta_1)\sqrt{n}}{2}$$
\begin{equation} \label{pairs in Q}
    \le \frac{1}{2} \left(1+\delta_1+\delta_4-\frac{1}{s-1} \right)^2 \frac{(1-\delta_3)n^2}{s-1}+\frac{(1+\delta_1)^2\delta_3n^2}{2(s-1)}.
\end{equation}
Note that by (\ref{eqndbound}) 
\begin{equation} \label{Qlowerbound}
\begin{split}
    |Q| 
& \ge n-d(a_1-1)-1 \\
& \ge n-da_1 \\
& \ge n - \left(\frac{n-1}{s-1}+s-2+a_1 \right) \\
& = \left(1-\frac{1}{s-1}\right)n +\frac{1}{s-1}-s+2-a_1.
\end{split}
\end{equation}
Since every $(s-1)$-set in $Q$ is covered, at least $|Q|^2/2(s-2)-|Q|/2$ pairs in $Q$ must be covered. We now show that
\begin{equation} \label{coveredpairsQ}
    \frac{1}{2} \left(1+\delta_1+\delta_4-\frac{1}{s-1} \right)^2 \frac{(1-\delta_3)}{s-1}+\frac{(1+\delta_1)^2\delta_3}{2(s-1)}<\frac{1}{2(s-2)}\left(1-\frac{1}{s-1}\right)^2.
\end{equation}
To see this, first note that $(1+\delta_1)^2\delta_3/2(s-1) \le 1/100s^3$ as $\delta_3 \ll 1/s^2$. Next
  \begin{align*}
&\frac{1}{2} \left(1+\delta_1+\delta_4-\frac{1}{s-1} \right)^2 \frac{(1-\delta_3)}{s-1}\\
& \le \frac{1}{2(s-1)} \left( \left(1-\frac{1}{s-1} \right)^2+3(\delta_1+\delta_4) \right)(1-\delta_3) \\
& \le \frac{1}{2(s-1)}\left(1-\frac{1}{s-1} \right)^2-\frac{\delta_3}{2(s-1)} \left(1-\frac{1}{s-1} \right)^2 + 3(\delta_1+\delta_4) \\
& \le \frac{1}{2(s-1)}\left(1-\frac{1}{s-1} \right)^2 -\frac{\delta_3}{3s} \\
& =  \frac{1}{2(s-2)}\left(1-\frac{1}{s-1} \right)^2 -\frac{1}{2(s-1)(s-2)}\left(1-\frac{1}{s-1} \right)^2-\frac{\delta_3}{3s}
\end{align*}
as $\delta_3 \gg \delta_4 \gg \delta_1$. Since $[1/2(s-2)(s-1)] \cdot (1-1/(s-1))^2 > 1/100s^3$, this proves (\ref{coveredpairsQ}). This means the quadratic coefficient of  the lower bound of $|Q|^2/2(s-2)-|Q|/2$ from (\ref{Qlowerbound}) is larger than the quadratic coefficient of the upper bound for the number of covered pairs in $Q$ from (\ref{pairs in Q}). It follows that for $n \ge n_0(s)$, the number of covered pairs in $Q$ is less than $|Q|^2/2(s-2)-|Q|/2$. Hence we have a contradiction, so it is not possible that $(1-\delta)\sqrt{n}\le a_1 < (1+\delta_2)\sqrt{n}$.

{\bf Case 3:} $10 \sqrt{sn} \le a_1 \le \left( \frac{1}{s-1}-\varepsilon \right) n$, where $\varepsilon=1/10s^2$. 

 By Lemma~\ref{bounds}.5, we can assume $d \in \left\{q,q+1\right\}$. Suppose $d=1$. Then, $|Q|=n-a_1 \ge (1-1/(s-1)+\varepsilon)n$, so by Lemma \ref{bounds}.2 for $n \ge n_0(s)$
 $$m > \frac{|Q|-1}{s-2}+s-2 \ge \frac{n}{s-1}+\frac{\varepsilon n}{s-2}+s-2-\frac{1}{s-2}>\frac{n-1}{s-1}+s-1.$$
 Hence, we can assume $d \ge 2$. If $|Q| > n \left(1- 1/(s-1)\right)$, then for $n \ge n_0(s)$
 \begin{equation} \label{Qbound}
     m \ge \frac{|Q|-1}{s-2}+s-2+d \ge \frac{n}{s-1}+s-2-\frac{1}{s-2}+d>\frac{n-1}{s-1}+s-1.
 \end{equation}
 by Lemma~\ref{bounds}.2.
 Hence, we can assume $p \ge n/(s-1)$. We consider the cases $d \ge s+2$ and $d \le s+1$ separately. Suppose $d \ge s+2$. Observe that 
$$d \le q+1 \le  \frac{n-1}{a_1(s-1)}+\frac{s-2}{a_1}+1$$
and 
$$\frac{n}{s-1} \le p \le a_1 d \le \frac{n-1}{s-1}+s-2+a_1.$$
Suppose there are three sets in the neighborhood of $v$ with size less than $a_1/2$. Then,
$$p \le a_1(d-3)+\frac{3a_1}{2} \le \frac{n-1}{s-1}+s-2-\frac{a_1}{2}.$$
Since $a_1 \ge 10 \sqrt{sn}$, this upper bound for $p$ is smaller than $n/(s-1)$, so we have a contradiction. Hence there are at most two sets in the neighborhood of $v$ with size less that $a_1/2$. As we are assuming $d \ge s+2$,  there are at least $s$ sets with size more than $a_1/2 -1 \ge 4\sqrt{sn} $. Taking disjoint subsets of these $s$ sets of size $4\sqrt{sn}$ and applying Lemma~\ref{bounds}.3 we get
$$m \ge \frac{(4\sqrt{sn})^{s}}{\binom{s}{2} (4\sqrt{sn})^{s-2} } 
=\frac{16sn}{\binom{s}{2} } = \frac{32n}{s-1} > \frac{n-1}{s-1}+s-1$$
for $n \ge n_0(s)$. \\  
We now consider the case in which $d \le s+1$. Since $n/(s-1) \le p \le a_1d$, we have
$$a_1 \ge \frac{n}{d(s-1)} \ge \frac{n}{s^2-1}.$$
Let $A_1=\left\{x_1,\ldots,x_{a_1} \right\}$. If there are at least $2n/s^3$ vertices in $A_1$ with degree at least $s^2+1$, then $m>2n/s>(n-1)/(s-1)+s-1$, so we can assume that there are at most $2n/s^3$ vertices in $A_1$ with degree at least $s^2+1$. This implies that there are at least $n/(s^2-1)-2n/s^3$ vertices in $A_1$ with degree at most $s^2$. For $1 \le i \le a_1$, let
$$T_i=\sum_{j: \, x_i \in A_j, j\neq 1} \binom{a_j-1}{2}$$
and let $B=\left\{i : d(x_i) \le s^2 \right\}$. Then we have 
$$\sum_{i \in B} T_i < n^2$$
as each pair is in at most one $A_i$. It follows that there is some $\ell \in B$ such that 
$$T_\ell \le \frac{n^2}{|B|} \le \frac{n^2}{\frac{n}{s^2-1}-\frac{2n}{s^3}}=\frac{s^3(s^2-1)}{s^3-2s^2+2}n\le 4s^2 n.$$ 
By Jensen's inequality and the inequality $\binom{x}{2} \ge x^2/16$,
$$T_{\ell} \ge (d(x_{\ell})-1) \binom{\frac{1}{d(x_{\ell})-1} \sum_{k: \, x_{\ell} \in A_k, k\neq 1} (a_k-1)}{2} \ge \frac{1}{16(d(x_{\ell})-1)}\left(\sum_{k: \, x_{\ell} \in A_k, k\neq 1} (a_k-1) \right)^2.$$
Comparing the lower bound and upper bound for $T_{\ell}$ yields
$$\sum_{k: \, x_{\ell} \in A_k, k\neq 1} (a_k-1) \le 4\sqrt{d(x_{\ell})-1} \cdot 2s \sqrt{n}\le 8s^2 \sqrt{n}.$$

Let $Q_1$ be the set of points outside of the neighborhood of $x_{\ell}$. Then every $(s-1)$-set in $[n] \setminus Q_1$ is covered. Furthermore, since $a_1 \le (1/(s-1)-\varepsilon)n$ and $\varepsilon>\delta_2$
\begin{align*}
|Q_1| \ge n-a_1- 8s^2 \sqrt{n}
& \ge \left(1- \frac{1}{s-1}+\varepsilon-\frac{8s^2}{\sqrt{n}}\right)n \\
& \ge \left(1-\frac{1}{s-1}+\varepsilon-\delta_2\right)n \\
& > \left(1- \frac{1}{s-1} \right)n,
\end{align*}
so by Lemma~\ref{bounds}.2 with $Q$ replaced with $Q_1$ we get $m > (n-1)/(s-1)+s-1$ by the same computation as (\ref{Qbound}). 

{\bf Case 4:} $\left(1/(s-1)-\varepsilon \right)n<a_1<\lfloor (n-1)/(s-1) \rfloor$.

Suppose $d=1$. Then $|Q|=n-a_1>(s-2)n/(s-1)$ as $a_1 \le (n-1)/(s-1)-1$, so by Lemma \ref{bounds}.2 we have 
$$m \ge \frac{n-a_1-1}{s-2}+s-2+1>\frac{n-1}{s-1}+s-1.$$ 
We can assume $d=2$ as if $d \ge 3$, then $m>2a_1>(2/(s-1)-2\varepsilon)n>(n-1)/(s-1)+s-1$. Furthermore, we can assume the number of vertices in $A_1$ with degree greater than two is at most $(\varepsilon+1/s^3) n$ as if not the number of sets that intersect $A_1$ is at least
$$\left(\frac{1}{s-1}-\varepsilon \right)n+\left(\varepsilon+\frac{1}{s^3}\right)n=\left(\frac{1}{s-1}+\frac{1}{s^3}\right)n>\frac{n-1}{s-1}+s-1.$$
Suppose all the $(s-1)$-sets in $[n] \setminus A_1$ are covered. Observe that
$$|[n] \setminus A_1|=n-a_1 >n-\frac{n-1}{s-1}>\frac{s-2}{s-1}n>n_0(s-1),$$
and $a_1 \le (|Q|-1)/(s-2)$ by the same inequality used in the proof of Lemma  by \ref{bounds}.2. Define
$$\cL^{\prime} := \left\{A \cap ([n] \setminus A_1): A \in \cL, \,  |A \cap ([n] \setminus A_1)|\ge2 \right\}.$$
By induction on $s$
$$|\cL^{\prime}| \ge \frac{n-a_1-1}{s-2}+s-2 > \frac{n-1}{s-1}+s-2.$$
Since $A_1 \cap ([n] \setminus A_1)=\emptyset$, we have
$$m >\frac{n-1}{s-1}+s-1.$$ 
Hence we can assume there is some $(s-1)$-set $x_1,x_2,\ldots, x_{s-1}$ in $[n] \setminus A_1$ that is not covered. For any $p \in A_1$ the $s$-set $\{p,x_1,x_2,\ldots, x_{s-1}\}$ is covered, so there is a set containing a pair $px_i$ for some $i \in [s-1]$. Set
$$B_{x_i}=\left\{p \in A_1: p,x_i \in A_j \text{ for some } j\right\}$$
for $1 \le i \le s-1$. Without loss of generality, assume $|B_{x_1}| \ge |B_{x_2}| \ge \ldots \ge |B_{x_{s-1}}|$. Then 
$$|B_{x_1}| \ge \frac{a_1}{s-1}>\left(\frac{1}{(s-1)^2}-\frac{\varepsilon}{s-1} \right)n.$$
Let $B_{x_1}^{\prime} \subset B_{x_1}$ be the points in $B_{x_1}$ 
that have degree two. Since the number of points in $A_1$ with degree greater than two is at most $(\varepsilon+1/s^3)n$, we have 
$$|B_{x_1}^{\prime}| \ge \left( \frac{1}{(s-1)^2}-\frac{1}{s^3}-\varepsilon \left(1+\frac{1}{s-1} \right) \right)n.$$
Let $Q_2 = [n] \setminus (A_1 \cup \left\{x_1\right\})$ (see Figure \ref{x_1,...,x_{s-1}}). Suppose $\left\{u_1,\ldots, u_{s-1} \right\} \subset Q_2$ is uncovered and let $p \in B_{x_1}^{\prime}$. Then the $s$-set $\{p,u_1,u_2,\ldots, u_{s-1}\}$ must be covered, so there a set $A_i$ containing  $p,u_i$ for some $i$. Since $|B_{x_1}^{\prime}|>s-1$, there is $p_1,p_2 \in B_{x_1}^{\prime}$ and $1 \le j \le s-1$ so that the pairs $p_1 u_j$ and $p_2 u_j$ are both covered. The sets containing these pairs are distinct as $p_1,p_2 \in A_1$. This is a contradiction as any set containing a point from $B_{x_1}$ contains $x_1$, so it implies that pair $u_j x_1$ is in two distinct sets. Hence, all $(s-1)$-sets in $Q_2$ are covered. Observe that since $a_1 \le (n-1)/(s-1)$,
\begin{equation} \label{Q_2>n_0}
 |Q_2| = n-a_1-1  \ge n- \frac{n-1}{s-1}-1 =\frac{s-2}{s-1}n+\frac{1}{s-1}-1 \ge n_0(s-1).
\end{equation}

Consider the collection of sets $\left\{A_i \cap Q_2 \right\}$. Since $a_1 \le (n-1)/(s-1)-1\le (n-2)/(s-1)$, 
$$a_1 \le \frac{n-a_1-2}{s-2} =\frac{|Q_2|-1}{s-2},$$
so $|A_i \cap Q_2| \le (|Q_2|-1)/(s-2)$ for all $i$. Suppose $a_1 < (n-1)/(s-1)-1$. Then, by induction on $s$, the number of sets in $\mathcal{L}$ in $Q_2$ is at least 
\begin{equation} \label{Q_2 bound}
    \frac{|Q_2|-1}{s-2}+s-2 = \frac{n-a_1-2}{s-2}+s-2 >  \frac{n-1}{s-1}+s-2,
\end{equation}
Since $A_1 \cap Q_2 = \emptyset$, $m > (n-1)/(s-1)+s-1$. 

We now consider the case  $a_1 = (n-1)/(s-1)-1$. Note that $(s-1) \, | \, (n-1)$ in this case. By the inequality in (\ref{Q_2 bound}), we have $m \ge (n-1)/(s-1)+s-1$. As stipulated by the induction statement, we are required to show that this inequality is strict. 
 \begin{figure}
   \begin{center}
      \begin{tikzpicture}

        \draw[step=1,help lines,black!20] (-0.95,-0.95) grid (6.95,6.95);
        
        \foreach \Point/\PointLabel in {((1,1)/, (1,2)/, (1,3)/, (1,4)/, (1,5)/,(2,5)/x_1,(3.4,1.4)/,(4.4,1.4)/,(5.4,1.4)/,(3.4,2.4)/,(4.4,2.2)/,(5.4,2.6)/,(3.4,3.4)/,(4.4,3.4)/,(5.4,3.4)/,(3.4,4.4)/,(4.4,4.4)/,(5.4,4.4)/,(3.8,4.7)/,(4.9,2.5)/ }
        \draw[fill=black] \Point circle (0.05) node[above right]
        {$\PointLabel$};
        \draw (1,3) ellipse (0.25cm and 2.3cm) node at (1,0)  {$A_1$};
        \draw[dashed](2,5)--(1,5) node[above]{};
        \draw[dashed](2,5)--(1,4) node[above]{};
        \draw[dashed](2,5)--(1,3) node[above]{};
       \draw [decorate,decoration={brace,amplitude=10pt},xshift=-4pt,yshift=0pt] (0.75,3) -- (0.75,5) node [black,midway,xshift=-0.6cm] {$B_{x_1}^{\prime}$};
        \draw [draw=black] (6,5) rectangle (3.1,1) node at (4.5,0.5)  {$Q_2$};
    \end{tikzpicture}  
    \end{center}
     \caption{$Q_2$ and $B_{x_1}^{\prime}$} \label{x_1,...,x_{s-1}}
   \end{figure}

\begin{claim} \label{a_1=(n-1)/(s-1)-1}
If $a_1 \le (n-1)/(s-1)-1$, then $m>(n-1)/(s-1)+s-1$.
\end{claim}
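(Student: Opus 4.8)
The plan is to reduce to the single remaining configuration and then split on $s$. By Cases~1--3 and the part of Case~4 already treated, we may assume $d=2$, $a_1=(n-1)/(s-1)-1$ (so $(s-1)\mid(n-1)$), and that there is an uncovered $(s-1)$-set $\{x_1,\dots,x_{s-1}\}\subseteq[n]\setminus A_1$; recall that then every $(s-1)$-set of $Q_2:=[n]\setminus(A_1\cup\{x_1\})$ is covered and, by \eqref{Q_2>n_0}, $|Q_2|\ge n_0(s-1)$. I would work with $\cL':=\{A_i\cap Q_2:\ |A_i\cap Q_2|\ge2\}$: this is an $(s-1)$-cover of $Q_2$, every member has size at most $a_1$, the map $A_i\mapsto A_i\cap Q_2$ is injective on the indices counted, and $A_1\cap Q_2=\emptyset$. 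The arithmetic key is that $(s-1)\mid(n-1)$ forces $|Q_2|-1=(s-2)\tfrac{n-1}{s-1}$, hence $\tfrac{|Q_2|-1}{s-2}=\tfrac{n-1}{s-1}$, so each member of $\cL'$ has size at most $\tfrac{|Q_2|-1}{s-2}-1$. Therefore it is enough either to prove $|\cL'|\ge\tfrac{n-1}{s-1}+s-1$, or to prove $|\cL'|\ge\tfrac{n-1}{s-1}+s-2$ and exhibit one more set of $\cL$ that meets $Q_2$ in at most one point.

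For $s\ge4$ this follows at once from the strengthened statement: applying Theorem~\ref{mainthmstrong} to the $(s-1)$-cover $\cL'$ of $Q_2$ (valid since $s-1\ge3$, $|Q_2|\ge n_0(s-1)$, and every member of $\cL'$ has size at most $\tfrac{|Q_2|-1}{s-2}-1$) yields $|\cL'|>\tfrac{|Q_2|-1}{s-2}+(s-1)-1=\tfrac{n-1}{s-1}+s-2$, hence $|\cL'|\ge\tfrac{n-1}{s-1}+s-1$ and $m\ge|\cL'|+1>\tfrac{n-1}{s-1}+s-1$.

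The case $s=3$ is where the work lies. Here $\cL'$ is a genuine linear space on $Q_2$ with all lines shorter than $|Q_2|-1$, so de Bruijn--Erd\H os gives $|\cL'|\ge|Q_2|$, with equality only for a near-pencil (excluded, since it has a line of length $|Q_2|-1=a_1+1>a_1$) or a projective plane. If $|\cL'|\ge|Q_2|+1$ then $m\ge|\cL'|+1\ge|Q_2|+2>\tfrac{n-1}{2}+2$ and we are done, so suppose $\cL'$ is a projective plane of order $q$; then $|Q_2|=q^2+q+1$, $a_1=q^2+q-1$, and $q\ge2$ since $n$ is large. I would then produce a set of $\cL$ other than $A_1$ meeting $Q_2$ in at most one point, giving $m\ge|\cL'|+2\ge|Q_2|+2$. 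If no such set existed, every set through $x_2$ would lie in $\cL'$; distinct sets through $x_2$ meet only in $x_2$, hence restrict to distinct lines of the plane through $x_2$, so $d(x_2)\le q+1$. As each $p\in A_1$ has $\{p,x_1\}$ or $\{p,x_2\}$ covered and distinct $p$ give distinct covering sets (each meets $A_1$ in one point), $|B_{x_2}|\le d(x_2)\le q+1$, whence $|B_{x_1}|\ge a_1-(q+1)=q^2-2$ and $d(x_1)\ge|B_{x_1}|\ge q^2-2\ge2$. But every set through $x_1$ lies in $\cL'$, and distinct sets through $x_1$ meet only in $x_1\notin Q_2$, so their traces on $Q_2$ are disjoint lines of a projective plane, which is impossible.

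The step I expect to be the real obstacle is exactly this $s=3$ equality case of de Bruijn--Erd\H os. For $s\ge4$ the strengthened induction hypothesis carries everything, but de Bruijn--Erd\H os has no such strengthening, so one must rule out $\cL'$ being a projective plane by hand; making the inequalities $d(x_2)\le q+1$ and $d(x_1)\ge q^2-2$ collide---which rests on the fact that two lines through a point outside $Q_2$ leave disjoint traces in $Q_2$ while two lines of a projective plane cannot be disjoint---is the heart of the matter.
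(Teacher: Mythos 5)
Your proof is correct and follows essentially the same route as the paper: reduce to $a_1=(n-1)/(s-1)-1$, apply the strengthened inductive hypothesis (Theorem~\ref{mainthmstrong}) to the trace family on $Q_2$ for $s\ge4$, and for $s=3$ invoke the equality case of de Bruijn--Erd\H os, excluding the near pencil by line length and the projective plane via the fact that two sets through a point outside $Q_2$ leave disjoint traces while two lines of a projective plane must meet. The only cosmetic difference is in how the projective plane is killed: you route through $x_2$ to get $d(x_2)\le q+1$ and hence $d(x_1)\ge q^2-2\ge 2$, whereas the paper uses $d(x_1)\ge |B_{x_1}|\ge a_1/2\ge 3$ directly and splits on whether two sets in $N(x_1)$ have traces in $\cL_2$; both hinge on the same intersection observation.
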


\begin{proof} If $a_1<(1/(s-1)-\varepsilon)n$, then the claim follows by the arguments in Cases 1-3. If $(1/(s-1)-\varepsilon)n \le a_1<(n-1)/(s-1)-1$, then (\ref{Q_2 bound}) shows $m>(n-1)/(s-1)+s-1$, so we can assume $a_1=(n-1)/(s-1)-1$. Define 
$$\cL_2 := \left\{A \cap Q_2 : A \in \cL, \, |A \cap Q_2| \ge 2 \right\}.$$
We first consider the case $s=3$. Then $a_1=(n-1)/2-1$, $|Q_2|=n-(n-1)/(s-2)+1-1=(n-1)/2+1$, and every pair in $Q_2$ is covered. By the de Bruijn Erd\H os theorem,  $|\cL_2|\ge (n-1)/2+1$. If $|\cL_2|>(n-1)/2+1$, then $m>(n-1)/2+2$ as $A_1 \cap Q_2 = \emptyset$. Hence we can assume $|\cL_2| = (n-1)/2+1$. By the de Bruijn Erd\H os theorem, $\cL_2$ is either a near pencil or projective plane. If $\cL_2$ is a near pencil then, there is a set of size $(n-1)/2$ in $\cL_2$. This is a contradiction as the largest set in $\mathcal L$ has size $a_1 =(n-1)/2-1$. Suppose now that $\cL_2$ is a projective plane, so $|\mathcal L_2|=|Q_2|=(n-1)/2$. Recall that $x_1$ has degree at least $|B_{x_1}|\ge a_1/(s-1)=a_1/2\ge 3$. If $N(x_1)$ contains $A_i, A_j$ such that neither $A_i \cap Q_2$ nor $A_j \cap Q_2$ is in $\mathcal L_2$,  then $m \ge |\mathcal L_2| +3 > (n-1)/2+2$. So $N(x_1)$ contains $A_i, A_j$ such that both $A_i \cap Q_2$ and $A_j \cap Q_2$ are in $\mathcal L_2$. But  $\mathcal L_2$ is a projective plane hence $(A_i \cap Q_2) \cap (A_j \cap Q_2) \ne \emptyset$, which means that $
|A_i \cap A_j| \ge 2$, a contradiction. 

Now, suppose $s \ge 4$. Suppose $a_1=(n-1)/(s-1) -1$. Then every $(s-1)$-set in $Q_2$ is covered. We also have 
$$\frac{|Q_2|-1}{s-2}=\frac{n-a_1-2}{s-2}=\frac{n-1}{s-1},$$
so $a_1 \le (|Q_2|-1)/(s-2)-1$. This shows $|A| \le (|Q_2|-1)/(s-2)-1$ for all $A \in \cL_2$. Since $|Q_2| \ge n_0(s-1)$ by (\ref{Q_2>n_0}),  the inductive hypothesis implies
$$|\cL_2| > \frac{|Q_2|-1}{s-2}+s-2=\frac{n-1}{s-1}+s-2$$
As $A_1 \not\in \mathcal L_2$, it follows that $m>(n-1)/(s-1)+s-1$  and this 
concludes the proof of the claim and Case 4.\end{proof}

{\bf Case 5:} $a_1= \lfloor (n-1)/(s-1) \rfloor $.

We note that for  $x>1$, we have $\lfloor x \rfloor> x-1$ so in this case $a_1 > (n-1)/(s-1)-1$ and we only need to prove that $m \ge (n-1)/(s-1)+s-1$.

Suppose all the $(s-1)$-sets in $[n] \setminus A_1$ are covered. Observe that $n-a_1 \ge n_0(s-1)$ by (\ref{Q_2>n_0}) and $a_1 \le (n-a_1-1)/(s-2)$ as $a_1 \le (n-1)/(s-1)$. Define
$$\cL^{\prime} := \left\{A \cap ([n] \setminus A_1): A \in \cL, \,  |A \cap ([n] \setminus A_1)|\ge2 \right\}.$$
By induction on $s$, 
$$|\cL^{\prime}| \ge \frac{n-a_1-1}{s-2}+s-2\ge \frac{n-1}{s-1}+s-2.$$
Since $A_1 \cap ([n] \setminus A_1) = \emptyset$, we have $m \ge (n-1)/(s-1)+s-1$.

Now, we consider the case in which there is an $(s-1)$-set $\{x_1,\ldots, x_{s-1}\}$ in $[n] \setminus A_1$ that is uncovered. Note that we can assume $d:=\min_{w \in A_1}d(w)=2$ in this case as if $d=1$, then all the $(s-1)$-sets in $[n] \setminus A_1$ are covered and if $d \ge 3$, then $m \ge 2a_1>(n-1)/(s-1)+s-1$. Furthermore, we may assume that the number of points with degree at least 3 in $A_1$ is at most $s-1$, as if there are at least $s$ points with degree at least 3, we have $m \ge 1+a_1+s \ge (n-1)/(s-1)+s-1$. For any $p \in A_1$ the $s$-set $\{p,x_1,x_2,\ldots, x_{s-1}\}$ is covered, so $\{p,x_i\}$ is covered for some $i \in [s-1]$. For  $1 \le i \le s-1$, set
$$B_{x_i}=\left\{p \in A_1: \{p,x_i\} \subset A_j \text{ for some } j\right\}.$$
 Without loss of generality, assume $|B_{x_1}| \ge |B_{x_2}| \ge \cdots \ge |B_{x_{s-1}}|$. Then 
$$|B_{x_1}| \ge \frac{a_1}{s-1}.$$
Let $B_{x_1}^{\prime} \subset B_{x_1}$ be the set of points in $B_{x_1}$ 
that have degree two. Since the number of points in $A_1$ with degree greater than two is at most $s-1$, we have 
\begin{equation} \label{eqn:bx1'}
|B_{x_1}^{\prime}| \ge \frac{a_1}{s-1}-s+1.\end{equation}
Let $Q_2 = [n] \setminus (A_1 \cup \left\{x_1\right\})$. Suppose $\left\{u_1,\ldots, u_{s-1} \right\} \subset Q_2$ is uncovered and let $p \in B_{x_1}^{\prime}$. Then the $s$-set $\{p,u_1,u_2,\ldots, u_{s-1}\}$ must be covered, so there a set $A_{i_j}$ containing  $p$ and $u_j$ for some $j$. Since $|B_{x_1}^{\prime}|>s-1$, there are $p_1,p_2 \in B_{x_1}^{\prime}$ and $1 \le j \le s-1$ so that the pairs $p_1 u_j$ and $p_2 u_j$ are both covered. The sets containing these pairs are distinct as $p_1,p_2 \in A_1$. This is a contradiction as any set containing a point from $B_{x_1}$ contains $x_1$, so it implies that the pair $u_j, x_1$ is in two distinct sets. 
\begin{equation} \label{eqn:covered}
\hbox{ Hence all $(s-1)$-sets in $Q_2$ are covered.} 
\end{equation}
Define 
$$\cL_2 := \left\{A \cap Q_2 : A \in \cL, \, |A \cap Q_2| \ge 2 \right\}.$$
We first consider the case  $s=3$. Suppose first that $n$ is odd, say $n=2k+1$ for some integer $k$. Then, $a_1=k$ and $|Q_2| = k$. Then either $Q_2 \in \cL$ or by the de Bruijn-Erd\H os theorem $|\cL_2|\ge k$. Suppose $Q_2 \in \mathcal{L}$. Since $A_1$ has minimum degree 2, we have $m \ge k+2 = (n-1)/2+2$ as required. 
Now suppose $Q_2 \notin \cL$. By the de Bruijn-Erd\H os theorem, $|\cL_2|\ge k$. If $|\cL_2|\ge k+1$, we have $m \ge k+2$ as $A_1 \cap Q_2 = \emptyset$. If $|\cL_2|=k$, then $\cL_2$ is either a near pencil or projective plane. In either case, any pair of sets in $Q_2$ intersects in exactly one point. Suppose $m=k+1$ for the sake of contradiction. Then since $x_1$ has degree at least $a_1/(s-1)=a_1/2$, there must be a matching of size $a_1/2$ in $Q_2$. This is a contradiction, so we have $m \ge k+2$. 

Now, suppose $n=2k$. Then, $a_1=k-1$ and $|Q_2| =k$. Since $|Q_2|>a_1$, by the de Bruijn-Erd\H os theorem $|\cL_2| \ge k$. This implies $m \ge k+1$. By the same argument as above using the fact that $x_1$ has degree at least $a_1/2$, we have $m \neq k+1$, so $m \ge k+2$. 

We now consider $s \ge 4$. Write $n-1=(s-1)\ell+r$ for integers $\ell,r$ with $0 \le r < s-1$ so that $a_1 =\ell$.  We will show that $m \ge \ell+s \ge (n-1)/(s-1)+s-1$.

{\bf Case 5.1:} $r \ge 2 $.

Note that $|Q_2|=n-a_1-1= (s-2)\ell+r$ and 
$$\frac{|Q_2|-1}{s-2}=\ell+\frac{r-1}{s-2} \ge \ell.$$
For $i>1$,  $|A_i \cap Q_2| \le a_1 =\ell \le (|Q_2|-1)/(s-2)$. Since $|Q_2| \ge n_0(s-1)$, by induction on $s$,
$$|\cL_2| \ge \frac{|Q_2|-1}{s-2}+s-2=\ell + \frac{r-1}{s-2}+s-2.$$
Since $A_1 \cap Q_2 = \emptyset$,
 $$m \ge \ell + \frac{r-1}{s-2}+s-1.$$
This shows that $m \ge \ell +s$ as $r \ge 2$.

{\bf Case 5.2:} $r \in \left\{0,1\right\}$.

{\bf Case 5.2.1:}  There is no $B_1 \in \cL$ of size $\ell$ such that  $B_1 \subset Q_2$.

Suppose $r=1$. Note that $|Q_2|=(s-2)\ell+1$ and 
$$\frac{|Q_2|-1}{s-2}=\ell.$$
We also have $|A_i \cap Q_2| \le \ell-1$ for all $i$. Since $|Q_2| \ge n_0(s-1)$, by induction we have the strict inequality
$$|\cL_2| > \frac{|Q_2|-1}{s-2}+s-2=\ell +s-2.$$
Since $A_1 \cap Q_2 = \emptyset$, this implies $m \ge \ell + s$.

Suppose $r=0$. Note that $\ell = (n-1)/(s-1)$, $|Q_2|=(s-2)\ell$ and 
$$\left\lfloor  \frac{|Q_2|-1}{s-2} \right\rfloor =\ell-1.$$
We also have $|A_i \cap Q_2| \le \ell-1$ for all $i$ and $|Q_2| \ge n_0(s-1)$. By induction on $s$, 
$$|\cL_2| \ge \frac{|Q_2|-1}{s-2}+s-2=\ell - \frac{1}{s-2}+s-2.$$
Since $A_1 \cap Q_2 = \emptyset$, this implies
 $$m \ge \ell - \frac{1}{s-2}+s-1.$$
Since $s \ge 4$, this shows $m \ge \ell +s-1=(n-1)/(s-1)+s-1$.

{\bf Case 5.2.2:}  There exists $B_1 \in \cL$ of size $\ell$ such that  $B_1 \subset Q_2$.

 We use an iterative argument for this case. Recall (\ref{eqn:covered}) that all $(s-1)$-sets in $Q_2$ are covered. Let $2 \le j \le s-2$. Suppose all $(s-j+1)$-sets in $Q_j$ are covered and $B_{j-1} \subset Q_{j}$ is an $\ell$-set in $\mathcal{L}$. Let $Q_{j+1}=Q_{j} \setminus B_{j-1}$ so that $A_1, B_1, \ldots, B_{j-1}$ is a matching. We can assume the minimum degree among points in $B_{j-1}$ is 2 as we did with $A_1$. Similarly we can also assume that the number of points in $B_{j-1}$ with degree at least 3 is at most $s-1$. We can also assume the maximum degree of points in $B_{j-1}$ is at most $s$, as otherwise $m \ge 1+\ell + (s+1-2) = \ell+s$. Recall that $B_{x_1}^{\prime}$ is the set of $v \in A_1$ such that $\{v,x_1\}$ is covered and $d(v)=2$. We first construct a matching $M_{j-1} \subset B_{x_1}^{\prime} \times B_{j-1}$ of covered pairs. For every $v \in B_{x_1}'$, let $X_v \in \cL$ be the set containing $x_1$ and $v$. We will assume at most $s-1$ of the sets $\left\{ X_v \right\}_{v \in B_{x_1}'}$ are disjoint from $B_{j-1}$ as otherwise we have $m \ge \ell + s$ since points in $B_{j-1}$ have minimum degree 2. This means that there are at least $B_{x_1}'-s+1$ covered pairs in $B_{x_1}^{\prime} \times B_{j-1}$. Since the maximum degree of points in $B_{j-1}$ is at most $s$, there is a matching $M \subset B_{x_1}^{\prime} \times B_{j-1}$ of size at least $(|B_{x_1}^{\prime}|-s+1)/s$. Let $M_{j-1}$ be the matching formed by deleting pairs $(g,h)$ with $d(h) \ge 3$ from $M$. Note that  (\ref{eqn:bx1'}) implies
$$|M_{j-1}|\ge \frac{|B_{x_1}^{\prime}|-s+1}{s}-s+1 >s-j.$$
 We now claim that the $(s-j)$-sets in $Q_{j+1}$ are covered. Suppose $\{h_1,\ldots,h_{s-j}\}$ is an uncovered set in $Q_{j+1}$. Since every $(s-j+1)$-set in $Q_j$ is covered, for every $p \in B_{j-1}$, the $(s-1)$-set $p,h_1,\ldots,h_{s-j}$ is covered. It follows that the pair $p,h_i$ is covered for some $i \in [s-j]$. Since $|M_{j-1}|>s-j$, there is $h_k$ and points $x,y$ in the restriction of $M_{j-1}$ to $B_{j-1}$ so that the pairs $x,h_k$ and $y,h_k$ are both covered. This is a contradiction as $d(x)=d(y)=2$, so sets in $\mathcal{L}$ that contain $x,h_k$ and $y,h_k$ both contain $x_1$. Hence the pair $h_k,x_1$ is in two sets in $\mathcal{L}$. Consequently, all $(s-j)$-sets in $Q_{j+1}$ are covered. Define 
$$\cL_{j+1} := \left\{A \cap Q_{j+1} : A \in \cL, \, |A \cap Q_{j+1}| \ge 2 \right\}.$$ 
 
Let us first suppose that $r=1$. Assume  $j<s-2$. If $|A_i \cap Q_{j+1}| \le \ell-1$ for all $i$, then $|A_i \cap Q_{j+1}| < \ell = (|Q_{j+1}|-1)/(s-j-1)$. Since
$$|Q_{j+1}|=(s-j-1)\ell+1=(s-j-1)\frac{n-2}{s-1} \ge n_0(s-j),$$
and by induction,
$$|\cL_{j+1}| > \frac{|Q_{j+1}|-1}{s-j-1}+s-j-1=\ell+s-j-1.$$
Since $A_1,B_1,\ldots,B_{j-1}$ are disjoint from $Q_{j+1}$, we get $m \ge \ell+s$. Otherwise, there is a set $B_j \subset Q_{j+1}$ with size $\ell$ in $\cL$ and we continue the procedure.  
\begin{figure}
   \begin{center}
      \begin{tikzpicture}

        \draw[step=1,help lines,black!20] (-0.95,-0.95) grid (6.95,6.95);
        
        \foreach \Point/\PointLabel in {((1,1)/, (1,2)/, (1,3)/, (1,4)/, (1,5)/,(2,5)/x_1,(3.2,1.6)/,(3.2,2.4)/,(3.2,3)/,(3.2,3.8)/,(3.2,4.6)/,(4.2,1.6)/,(4.2,2.4)/,(4.2,3)/,(4.2,3.8)/,(4.2,4.6)/,(5.2,1.5)/,(5.2,2.3)/,(5.4,2.9)/,(5.7,3.6)/,(6.2,3.6)/,(6.5,1.6)/,(5.25,5.2)/,(5.25,5.2)/,(6,5)/}
        \draw[fill=black] \Point circle (0.05) node[above right]
        {$\PointLabel$};
        \draw (1,3) ellipse (0.25cm and 2.3cm) node at (1,0)  {$A_1$};
        \draw (3.2,3.1) ellipse (0.25cm and 2cm) node at (3.2,0.6)  {$B_1$};
        \draw (4.2,3.1) ellipse (0.25cm and 2cm) node at (4.2,0.6)  {$B_2$};
        \draw (4.75,1)--(4.75,5.5) node[above]{};
        \draw[dashed](2,5)--(1,5) node[above]{};
        \draw[dashed](2,5)--(1,4) node[above]{};
        \draw[dashed](2,5)--(1,3) node[above]{};
       \draw [decorate,decoration={brace,amplitude=10pt},xshift=-4pt,yshift=0pt] (0.75,3) -- (0.75,5) node [black,midway,xshift=-0.6cm] {$B_{x_1}^{\prime}$};
        \draw [draw=black] (7,5.5) rectangle (2.7  ,1) node at (5.75,0.6)  {$Q_4$};
    \end{tikzpicture}  
    \end{center}
     \caption{Iterative Procedure} \label{B_i}
   \end{figure}
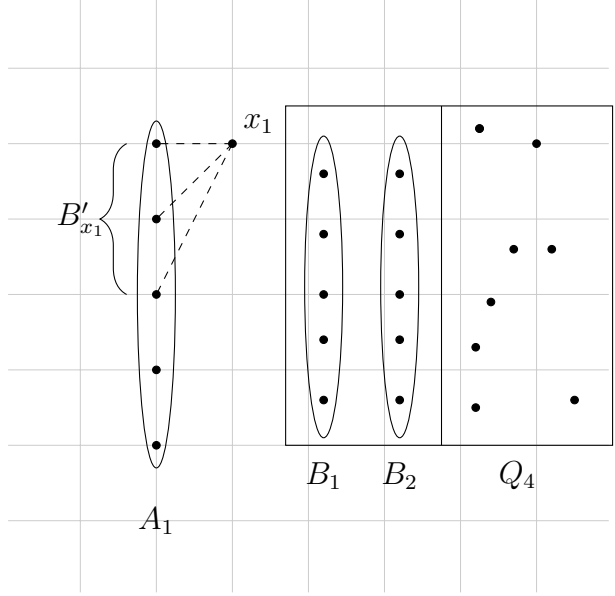
Now, suppose the procedure terminates at $j=s-2$. Then, we have $\ell$-sets $A_1,B_1,\ldots,B_{s-3}$ and all the pairs in $Q_{s-1}=[n] \setminus (A_1 \cup \left\{x_1\right\}\cup B_1 \cup \ldots \cup B_{s-3})$ are covered. Since $|Q_{s-1}|=\ell+1$, by the de Bruijn-Erd\H os theorem, $|\cL_{s-1}|\ge \ell+1$. Since $A_1,B_1,\ldots,B_{s-3}$ are disjoint from $Q_{s-1}$, we have $m \ge \ell+s-1$. We now show that $m \ge \ell+s$. Suppose $m=\ell+s-1$ for the sake of contradiction. Then, $\cL_{s-1}$ is either a near pencil or projective plane. Since $x_1$ has degree at least $\ell /(s-1)$ and $m=\ell+s-1$, there must be a matching of size $\ell/(s-1)$ in $\cL_2$. Since $B_1,\ldots,B_{s-3}$ have size $\ell$, they cannot contain $x_1$. It follows that there must be a matching of size $\ell/(s-1)$ among the elements of $\mathcal{L}$ that cover the pairs in $Q_{s-1}$. This is a contradiction as any 2 sets in $\cL_{s-2}$ intersect in exactly one point. This shows $m \ge \ell+s$.

Let us now suppose that $r=0$. Assume $j<s-2$.
If $|A_i \cap Q_{j+1}| \le \ell-1$ for all $i$, then $|A_i \cap Q_{j+1}| \le \lfloor (|Q_{j+1}-1)/(s-j-1) \rfloor$ due to
$$\frac{|Q_{j+1}|-1}{s-j-1}=\ell-\frac{1}{s-j-1}.$$
Since 
$$|Q_{j+1}|=(s-j-1)\ell=(s-j-1) \cdot \frac{n-1}{s-1}\ge n_0(s-j),$$
we may apply induction to obtain
$$|\cL_{j+1}| \ge \frac{|Q_{j+1}|-1}{s-j-1}+s-j-1=\ell-\frac{1}{s-j-1}+s-j-1.$$ Since $A_1,B_1,\ldots,B_{j-1}$ are disjoint from $Q_{j+1}$, we have $m \ge \ell+s-1=(n-1)/(s-1)+s-1$ as $s-j-1>1$. Otherwise, there is a set $B_j \subset Q_{j+1}$ with size $\ell$ in $\cL$ and we continue the procedure.

Now, suppose the procedure goes up to $j=s-2$. Then we have $\ell$-sets $A_1,B_1,\ldots,B_{s-3}$ and all the pairs in $Q_{s-1}=[n] \setminus (A_1 \cup \left\{x_1\right\}\cup B_1 \cup \ldots \cup B_{s-3})$ are covered. Since $|Q_{s-1}|=\ell$, either $Q_{s-1} \in \mathcal{L}$ or by the de Bruijn-Erd\H os theorem, $|\cL_{s-1}|\ge \ell$. If $Q_{s-1} \in \mathcal{L}$, then $m \ge \ell+s-1$ as we have $s-1$ sets of size $\ell$ and an additional $\ell$ sets from the fact that $A_1$ has minimum degree 2. If $Q_{s-1} \notin \mathcal{L}$, then $|\cL_{s-1}|\ge \ell$, so we have $m \ge \ell+s-2$. Suppose $m=\ell+s-2$. Since $x_1$ has degree at least $\ell /(s-1)$ and $m=\ell+s-2$, there must be a matching of size $\ell/(s-1)$ in $\cL_2$. Since $B_1,\ldots,B_{s-3}$ have size $\ell$, they cannot contain $x_1$. It follows that there must be a matching of size $\ell/(s-1)$ in the elements of $\mathcal{L}$ that cover the pairs in $Q_{s-1}$. This is a contradiction as any two sets in $\cL_{s-1}$ intersect in exactly one point. This shows $m \ge \ell+s-1=(n-1)/(s-1)+s-1$. This concludes the proof of the lower bound in Theorem~\ref{mainthm}. \qed
\bigskip

{\bf Proof of Tightness for Theorem~\ref{mainthm}.} 
Suppose $(s-1) \, | \, (n-1)$. We construct a family of $s$ covers $\mathcal{L}_n(s)$ on $[n]$ with size $(n-1)/(s-1)+s-1$. Recall that a near pencil on $[n]$  comprises $n$ sets $A, B_1, \ldots, B_{n-1}$ where $A=[n-1]$ and $B_i=\{i,n\}$. By the de Bruijn-Erd\H os theorem, $\cL_n(2)$ consists of the near pencil or a finite
projective plane. For $s \ge 3$, and $t=(n-1)/(s-1)$, let $\cL_n(s)$ consists of families obtained by taking the disjoint union of some $\cL \in 
\cL_{n-t}(s-1)$ with a $t$-set $T$, and possibly enlarging each set in $\cL$ by a point in $T$ while ensuring that no two sets in our family have more than one point in common. It clear that members of $\cL_n(s)$ are $s$-covers as any $s$-set contains either 2 points in $T$ or $s-1$ points in the $\cL \in \cL_{n-t}(s-1)$. Moreover,
$$|\mathcal L|+1 = \frac{n - t-1}{s-2}+(s-2) +1 =\frac{n-1}{s-1} +s-1.$$ 
This shows Theorem~\ref{mainthm} is tight if $(s-1) \, | \, (n-1)$.

We now show Theorem~\ref{mainthm} is tight asymptotically. Suppose $(s-1) \nmid  (n-1)$ and $n$ is sufficiently large in terms of $s$. We construct an $s$-cover of size $n/(s-1) \cdot (1+o(1))$ as $n \rightarrow \infty$. In~\cite{prime}, Baker, Harman, and Pintz showed there is a prime number in the interval $[x,x+x^{0.525}]$ for $x$ sufficiently large. Setting $x=\sqrt{n/(s-1)}$ implies that there is a prime $q$ such that
\begin{equation} \label{qprime}
    \sqrt{\frac{n}{s-1}} \le q \le \sqrt{\frac{n}{s-1}}+\left(\frac{n}{s-1}\right)^{0.2625}.
\end{equation}
Let $A_1,A_2,\ldots, A_{s-3}$ be pairwise disjoint sets of size $\lfloor (n-1)/(s-1) \rfloor$. Let $x = (s-3)\lfloor (n-1)/(s-1) \rfloor+(q^2+q+1)$ and $A_{s-2}$ be a set of size $n-x$ with no points from the previous $A_i$'s. Let $A_{s-1},\ldots, A_m$ be a projective plane on the remaining $q^2+q+1$ points. Let $\cL=\left\{A_1,\ldots,A_m\right\}$. Note that any $s$-set has either 2 points in some $A_i$ where $1 \le i \le s-2$ or 2 points in the projective plane formed by $A_{s-1},\ldots,A_m$, so all $s$-sets are covered. We now show that $|A_{s-2}|=n-x \le \lfloor (n-1)/(s-1) \rfloor$. This is equivalent to 
$$n-(s-2)\left\lfloor \frac{n-1}{s-1}\right\rfloor  \le q^2+q+1.$$
This holds since 
$$n-(s-2)\left\lfloor \frac{n-1}{s-1}\right\rfloor
\le \frac{n-1}{s-1}+s-1 < \frac{n}{s-1}+\sqrt{\frac{n}{s-1}}+1 \le q^2+q+1.$$
 For $s-1 \le t \le m$, 
$$|A_t|=q+1\le \sqrt{\frac{n}{s-1}} +\left(\frac{n}{s-1}\right)^{0.2625}+1< \frac{n-1}{s-1}.$$
This shows $\cL$ is an $s$-cover of size 
$$q^2+q+1+s-2=\frac{n}{s-1}\left(1+o(1)\right)$$
by (\ref{qprime}). \qed

\bibliographystyle{plain}
\bibliography{scover-refs}
\end{document}